\title{Weyl Laws for Open Quantum Maps}
\author{Zhenhao Li}
\email{zhenhao@mit.edu}
\address{Department of Mathematics, Massachusetts Institute of Technology, Cambridge, MA 02139}
\begin{document}

\maketitle

\begin{abstract}
    We find Weyl upper bounds for the quantum open baker's map in the semiclassical limit. For the number of eigenvalues in an annulus, we derive the asymptotic upper bound $\mathcal O(N^\delta)$ where $\delta$ is the dimension of the trapped set of the baker's map and $(2 \pi N)^{-1}$ is the semiclassical parameter, which improves upon the previous result of $\mathcal O(N^{\delta + \epsilon})$. Furthermore, we derive a Weyl upper bound with explicit dependence on the inner radius of the annulus for quantum open baker's maps with Gevrey cutoffs. 
\end{abstract}

\section{Introduction}
Open quantum maps provide simple finite-dimensional models of open quantum chaos. This makes them especially conducive to numerical experimentation and thus appealing in the study of scattering resonances. They quantize a symplectic relation on a compact phase space. Such relations are toy models for Poincar\'e sections that arise when considering scattering Hamiltonians with hyperbolic trapped sets. See papers by Nonnenmacher--Sj\"ostrand--Zworski \cite{NSZ11, NSZ} for the precise description of the reduction from specific open quantum systems to open quantum maps using Poincar\'e sections. In this paper, the symplectic relation we consider is the classical baker's map on a 2-torus, which gives rise to the quantum open baker's map. We find a Weyl upper bound for the number of eigenvalues in an annulus.

The quantum open baker's map is an operator on 
\[\ell^2_N = \ell^2(\Z_N), \qquad \Z_N = \Z/(N\Z)\]
defined by the triple 
\begin{equation}\label{eq:triple}
    (M, \mathcal A, \chi), \quad M \in \N, \quad \mathcal A \subset \{0, \dots, M - 1\}, \quad \chi \in C_0^\infty((0, 1);[0, 1]).
\end{equation}
Here, $M$ is the \textit{base}, $\mathcal A$ is the \textit{alphabet}, and $\chi$ is the \textit{cutoff}. Put $N = K M$ where $K \in \mathcal \N$. Then the quantum open baker's map is given by 
\begin{equation*}
    B_N = \F^*_N \begin{pmatrix} \chi_{N/M} \F_{N/M} \chi_{N/M} & & \\ & \ddots & \\ & & \chi_{N/M} \F_{N/M} \chi_{N/M} \end{pmatrix}I_{\mathcal A, M}
\end{equation*}
where $\mathcal F_N$ is the unitary discrete Fourier transform, $I_{\mathcal A, M}$ is a diagonal matrix whose $(j, j)$-th entry is equal to one if $\lfloor j/K \rfloor \in \mathcal A$ and zero otherwise, and $\chi_{N/M}$ is a discretized smooth cutoff function. For example, for the triple $(3, \{0, 2\}, \chi)$ and $N = 3K$, the corresponding quantum open baker's map is then 
\[B_N = \F^*_N \begin{pmatrix} \chi_{K} \F_{K} \chi_{K} & 0 & 0 \\ 0 & 0 & 0 \\ 0 & 0 & \chi_{K} \F_{K} \chi_{K} \end{pmatrix}\]

Define the canonical relation on the torus $\mathbb{T}^2_{x, \xi}$ by 
\begin{equation}
    \begin{gathered}
    \varkappa_{M, \mathcal A} : (y, \eta) \mapsto (x, \xi) = \Big (My - a, \frac{\eta + a}{M} \Big), \\
    (y, \eta) \in \Big(\frac{a}{M}, \frac{a + 1}{M} \Big) \times (0, 1), \quad a \in \mathcal A. 
    \end{gathered}
\end{equation}
Then the corresponding semiclassical Fourier integral operator is given by 
\begin{equation*}
    \begin{gathered}
    \mathcal U_h := \sum_{a \in \mathcal A} \mathcal U_h^a \qquad \text{where} \\
    \mathcal U_h^a v(x) = \frac{M}{2 \pi h} \int_{\mathcal R^2} e^{\frac{i}{h}((x + a - My) \theta + xa/M)} \chi(M\theta) \chi(My - a)u(y)\, dy d\theta. 
    \end{gathered}
\end{equation*}
The quantum open baker's map can then be seen as the discrete analogue of this Fourier integral operator with the corresponding semiclassical parameter $(2 \pi N)^{-1}$. For a rigorous analysis of the analogy, see papers of Degli Esposti--Nonnenmacher--Winn \cite{DNW} and Nonnenmacher--Zworski \cite{NoZw07}. Heuristics can be found in earlier works of Bal\'azs--Voros \cite{BaVo} and Saraceno--Voros \cite{SaVo}. In view of this analogy, one would then expect that forward in time propagation by $B_N$ would lead to localization in frequency space to the Cantor set and backward propagation by $B_N$ would lead to localization in physical space to the Cantor set. Indeed, \cref{fig:propagation} demonstrates this property numerically. 

\begin{figure}
\begin{subfigure}{0.3\textwidth}
  \centering
  \includegraphics[width=\linewidth]{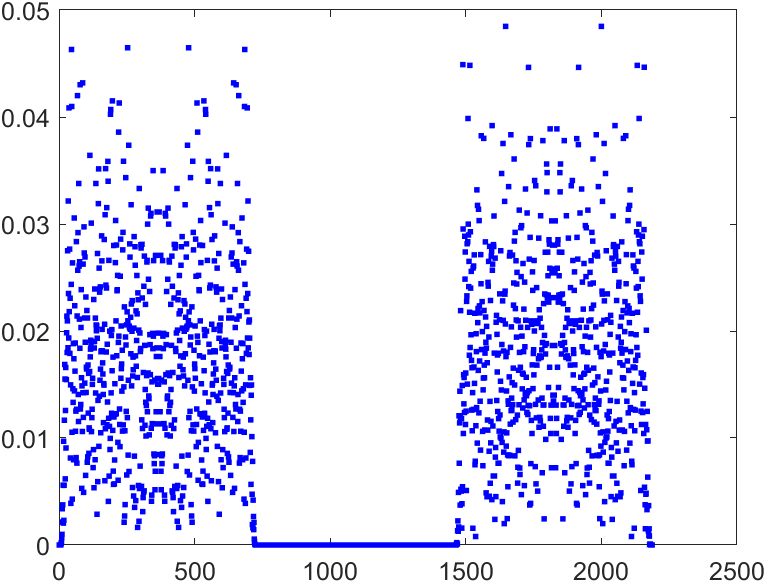}
  \caption{$\mathcal F_N B_N f$}
\end{subfigure}
\hfill
\begin{subfigure}{0.3\textwidth}
  \centering
  \includegraphics[width=\linewidth]{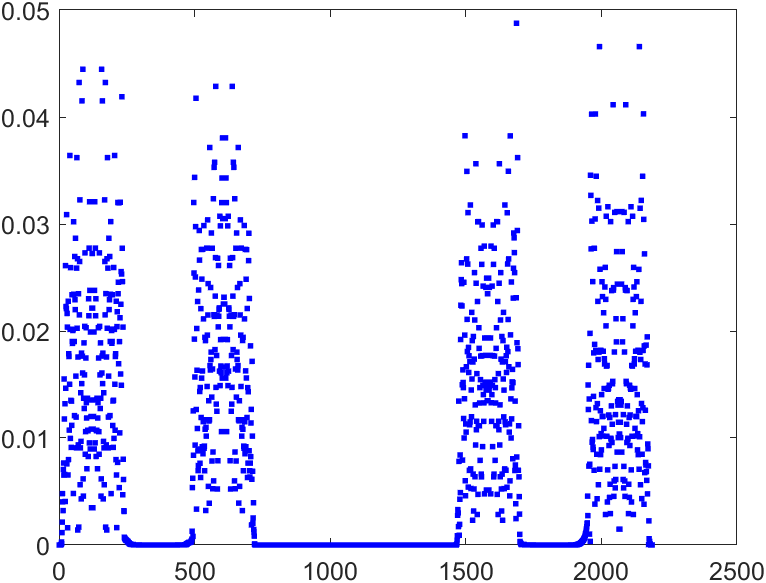}
  \caption{$\mathcal F_N B_N^2 f$}
\end{subfigure}
\hfill
\begin{subfigure}{0.3\textwidth}
  \centering
  \includegraphics[width=\linewidth]{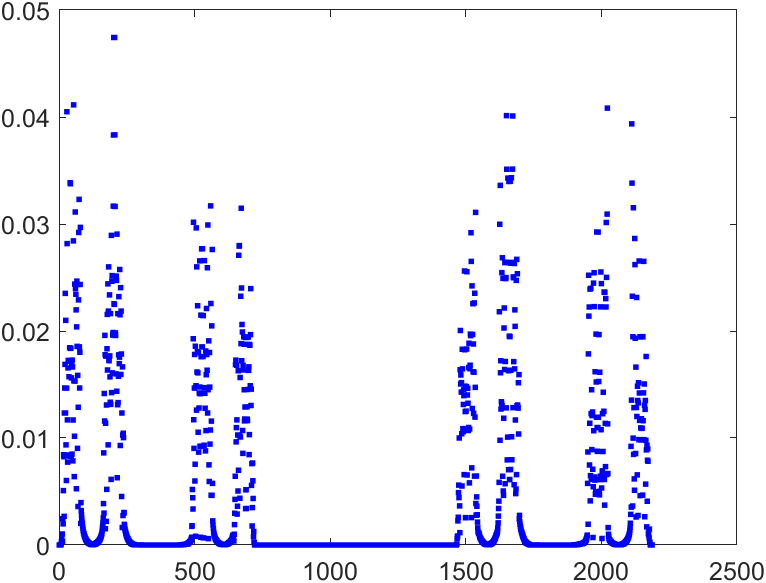}
  \caption{$\mathcal F_N B_N^3 f$}
\end{subfigure}

\begin{subfigure}{0.3\textwidth}
  \centering
  \includegraphics[width=\linewidth]{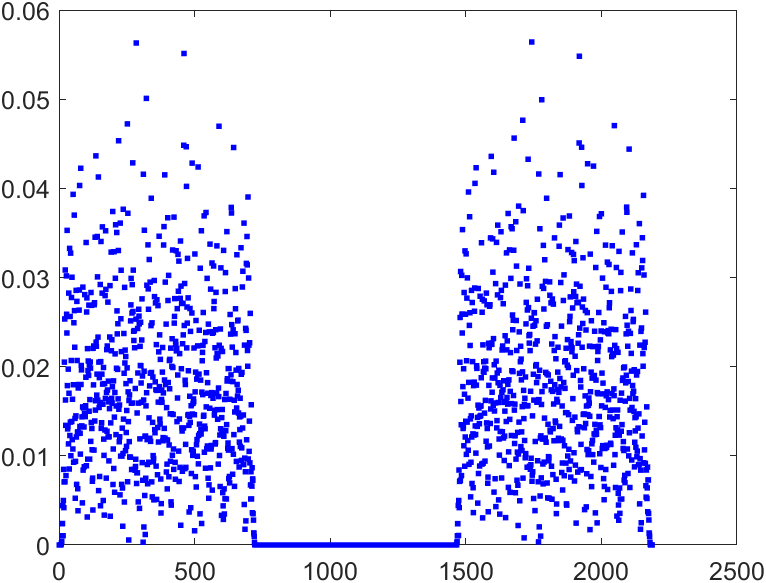}
  \caption{$(B_N^*) f$}
\end{subfigure}
\hfill
\begin{subfigure}{0.3\textwidth}
  \centering
  \includegraphics[width=\linewidth]{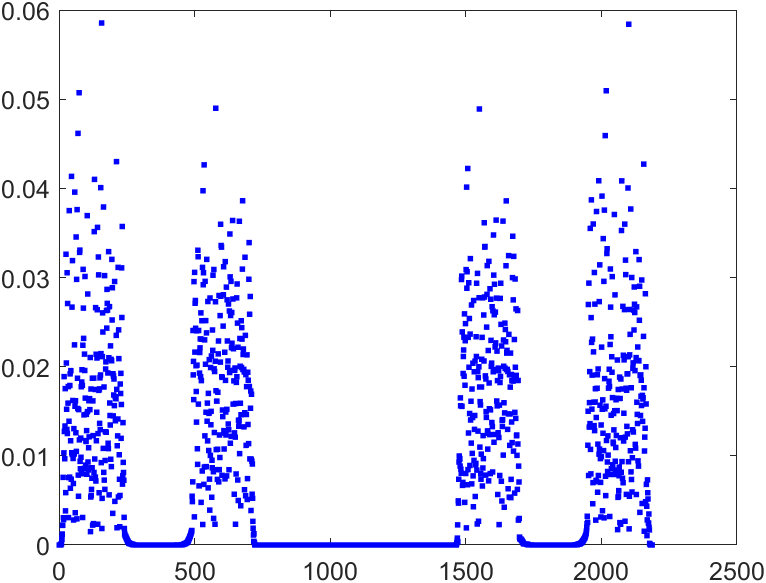}
  \caption{$(B_N^*)^2 f$}
\end{subfigure}
\hfill
\begin{subfigure}{0.3\textwidth}
  \centering
  \includegraphics[width=\linewidth]{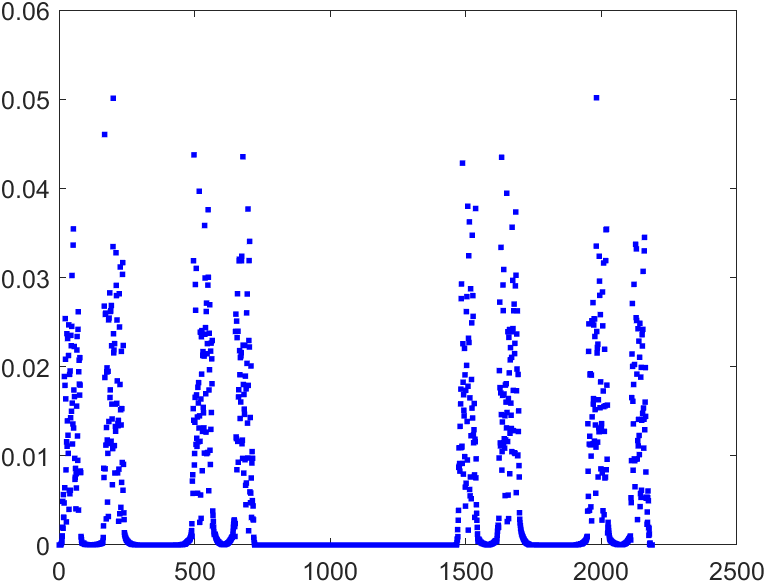}
  \caption{$(B_N^*)^3 f$}
\end{subfigure}
\caption{A demonstration of the localizing properties of $B_N$ for $M = 3$, $\mathcal A = \{0, 2\}$, and $N = 3^7$. An $\ell^2_N$ normalized vector $f$ was chosen uniformly at random. Plots (A) -- (C) are the frequency side of forward propagation, and (D) -- (F) are the spatial side of backward propagation. Each figure plots the absolute value of the indicated vector as a map from $\Z_N \to \R$}
\label{fig:propagation}
\end{figure}

Following the above observations, we should expect the eigenfunctions to be localized in frequency near the $(M, \mathcal A)$-Cantor set provided that the eigenvalues are not too small (see \cref{fig:typical_eigvec}). The maximum number of eigenfunctions that can be packed into such a region in phase space should then on the order of $N^\delta$ where
\begin{equation}
    \delta = \frac{\log |\mathcal A|}{\log M}
\end{equation}
is the dimension of the Cantor set. 
\begin{figure}
    \centering
    \includegraphics[width = 0.5\textwidth]{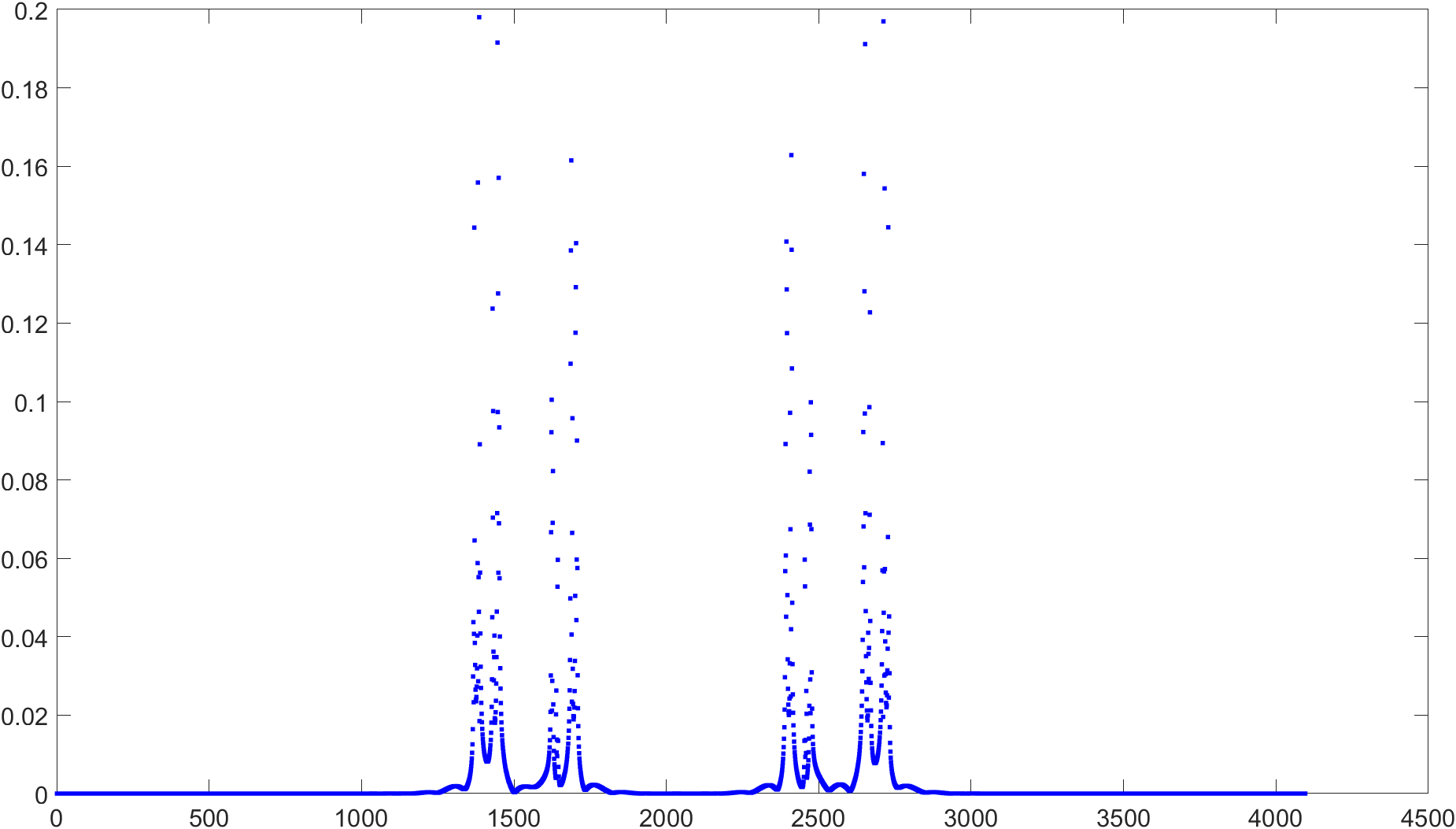}
    \caption{The Fourier side of a typical eigenvector. $M = 4$, $\mathcal A = \{1, 2\}$, and $\chi$ is identically $1$ on the Cantor set generated by $M$ and $\mathcal A$. Here, $N = 4^6$, and the absolute value of $\mathcal F_n v$ is plotted as a function from $\Z_N \to \R$ where $v$ is eigenvector with the 50th largest eigenvalue at $|\lambda| \approx M^{-0.4869}$.}
    \label{fig:typical_eigvec}
\end{figure}
Our result uses such localization properties to provide rigorous upper bounds to the number of eigenvalues of $B_N$ above a threshold. More specifically, consider the eigenvalue counting function 
\begin{equation}
    \mathcal N_N(\nu) = |\text{Spec}(B_N) \cap \{|\lambda| \ge M^{-\nu}\}|,
\end{equation}
defined for $\nu \ge 0$, where the eigenvalues are counted with multiplicities. Then we have the following Weyl upper bound:
\begin{theorem}\label{thm:weyl_law}
For each $\nu > 0$, we have as $N = K M \to \infty$, 
\begin{equation}
    \mathcal N_N(\nu) = \mathcal O(N^\delta).
\end{equation}
\end{theorem}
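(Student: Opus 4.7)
The plan is to show that every eigenvector of $B_N$ with eigenvalue of modulus at least $M^{-\nu}$ is microlocally concentrated near a finite-depth approximation of the trapped set, and then to bound the number of such eigenvectors by the effective rank of a pseudodifferential cutoff to that set.

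The starting observation is that if $B_N v = \lambda v$ with $|\lambda| \ge M^{-\nu}$, then for every $n \ge 1$,
\[v = \lambda^{-n} B_N^n v = \bar\lambda^{-n} (B_N^*)^n v.\]
From the block structure of $B_N$, a single application produces a vector whose frequency lies microlocally in $\bigcup_{a \in \mathcal A}[a/M,(a+1)/M]$, and iterating $n$ times restricts the frequency support to the depth-$n$ Cantor approximation $C^{(n)}$, a union of $|\mathcal A|^n$ intervals of length $M^{-n}$. By the symmetric argument, $(B_N^*)^n$ restricts position to $C^{(n)}$. Combining the two identities shows that $v$ is essentially supported on the depth-$n$ trapped set $C^{(n)} \times C^{(n)}$, consisting of $|\mathcal A|^{2n}$ boxes of side $M^{-n}$ in the phase-space torus.

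I would choose $n = \lceil \log N / (2 \log M) \rceil$ so that $M^{-n} \sim N^{-1/2}$ matches the semiclassical uncertainty $h^{1/2}$ with $h = (2\pi N)^{-1}$. Then $C^{(n)} \times C^{(n)}$ contains $|\mathcal A|^{2n} = N^\delta$ boxes of area $N^{-1}$, so its $h^{1/2}$-thickening has total area $\sim N^{\delta - 1}$. A semiclassical pseudodifferential cutoff $\Pi_n$ to this thickening has Hilbert-Schmidt norm squared $\|\Pi_n\|_{\mathrm{HS}}^2 = \mathcal O(N \cdot N^{\delta - 1}) = \mathcal O(N^\delta)$. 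The theorem then reduces to a standard singular-value estimate: if $r$ orthonormal vectors $v_j$ satisfy $\|(I - \Pi_n)v_j\| = o(1)$, then $r \le 2\|\Pi_n\|_{\mathrm{HS}}^2(1+o(1)) = \mathcal O(N^\delta)$.

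The core technical step is thus the uniform microlocalization $\|(I - \Pi_n)v\| = o(1)\|v\|$ for eigenvectors in the annulus. This amounts to an Egorov-type estimate at logarithmic time $n \sim \log N$: one propagates the symbol of the cutoff under $\varkappa_{M,\mathcal A}$ iteratively and controls the accumulated remainder. The main obstacle is precisely this long-time accumulation — a naive iteration loses a factor of $N^\epsilon$, which is the source of the previous bound $\mathcal O(N^{\delta + \epsilon})$. Removing the loss should exploit the fact that $\varkappa_{M,\mathcal A}$ is affine on each cell, so that the propagated symbols stay in a controlled semiclassical class and the per-step remainders are $\mathcal O(h^K)$ for arbitrary $K$. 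Balancing these remainders against the eigenvalue weight $|\lambda|^{-n} = N^{\nu/2}$, while exploiting the piecewise-affine structure of the map, is what should deliver the sharp $\mathcal O(N^\delta)$ bound in place of $\mathcal O(N^{\delta + \epsilon})$.
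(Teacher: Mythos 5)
Your microlocalization heuristic is the same one that drives the paper, but your counting step has a genuine gap, and the paper is structured the way it is precisely to avoid it. The operator $B_N$ is not normal, so its eigenvectors are not orthonormal; two eigenvectors with nearby distinct eigenvalues can be nearly parallel, and for a nontrivial Jordan block the eigenspace is one-dimensional while the algebraic multiplicity is larger. Your proposed endgame --- ``$r$ orthonormal eigenvectors $v_j$ with $\|(I-\Pi_n)v_j\| = o(1)$ forces $r \le 2\|\Pi_n\|_{\mathrm{HS}}^2(1+o(1))$'' --- therefore does not apply: you cannot assume orthonormality, Gram--Schmidt of nearly-parallel localized vectors does not preserve the $o(1)$ localization bound, and generalized eigenvectors do not satisfy $v = \lambda^{-n}B_N^n v$ at all (they pick up polynomially growing terms $\binom{n}{k}\lambda^{-k}(B_N-\lambda)^k v$). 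The paper sidesteps all of this by never counting eigenvectors directly: it builds an approximate inverse identity $I = Z(\lambda)(B_N-\lambda) + \mathcal R(\lambda) + A$ with $\|\mathcal R\|\le 1/2$ and $\rank A = \mathcal O(N^\delta)$ (Corollary~\ref{cor:long_time}), forms the holomorphic finite-rank determinant $F(\lambda) = \det(I - \lambda^{-1}AB_N(I-\mathcal R)^{-1})$ whose zero set contains the spectrum with multiplicity, bounds $\log|F|$ above and below by $CN^\delta$, and invokes Jensen's formula (Lemma~\ref{lem:jensen}). Multiplicity and pseudospectral degeneracy are handled automatically by the determinant.

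A secondary point: you propagate for time $n \approx \tfrac{\log N}{2\log M}$ and localize in both position and frequency to boxes of side $M^{-n}\sim N^{-1/2}$, which is exactly the uncertainty scale; there the commutator of a position cutoff and a frequency cutoff is only $\mathcal O(1)$, not small, so ``combining the two identities'' to conclude joint phase-space localization is not automatic. The paper instead propagates to the longer time $\ell \approx \tfrac{\log N}{\log M}$ and localizes only on the Fourier side to intervals of length $\sim N^{-1}$, which already yields rank $\mathcal O(N^\delta)$ and avoids any position/frequency commutator. Finally, the mechanism that removes the previous $N^\epsilon$ loss is concrete in the paper --- gap distances $d_{\ell-j} = L\,M^j/1.5^j$ chosen so that the remainder sum $\sum_j |\lambda|^{-j-1}\tilde g_\chi(d_{\ell-j})$ is geometrically convergent uniformly in $N$ --- whereas your sketch leaves this balancing unspecified.
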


The proof of the theorem in \S \ref{sec:weyl_law_pf} follows the methods used in \cite{DZ}, in which the bound 
\[\mathcal N_N(\nu) = \mathcal O(N^{\delta + \epsilon})\]
for any $\epsilon > 0$ was proved. We obtain the $\epsilon$-improvement in this paper by using tighter propagation estimates and a modified approximate inverse identity (see \S 3.1 for details). 

With stronger assumptions on the decay of the cutoff function $\chi$, an explicit dependence of the upper bound on the depth of the spectrum $\nu$ can be extracted. In particular, we consider Gevrey class functions, first introduced in \cite{Gev} to study regularity of solutions to the heat equation. Given $s \ge 1$, a function $f \in \C^\infty(\R)$ is $s$-Gevrey if for every compact $K \subset \R$, there exists a constant $C_{K, f}$ such that 
\[\sup_{x \in K} |\partial^\alpha f(x)| \le C_{K, f}^{\alpha + 1} (\alpha!)^s\]
for all $\alpha \in \Z^+$. For $s = 1$, this is simply the space of real analytic functions, which cannot be compactly supported. However, for every $s > 1$, there exist smooth and compactly supported $s$-Gevrey functions. For $s > 1$, we write
\[\mathcal{G}^s_{\mathrm{c}}((0, 1)) = \{f \in C_c^\infty(\R):\, \text{$f$ is $s$-Gevrey and } \supp f \subset (0, 1)\}\]
Observe that if $\chi \in \mathcal{G}^s_{\mathrm{c}}((0, 1))$ for some $s > 1$, then 
\begin{equation}\label{eq:ftdecay}
    |\hat \chi(\xi)| \le Ce^{-c |\xi|^{1/s}},
\end{equation}
for some positive constants $C$ and $c$. Here, $\hat \chi$ denotes the usual Fourier transform given by 
\begin{equation}\label{eq:cft}
    \hat \chi(\epsilon) = \int_\R e^{-2 \pi i x \xi} \chi(x)\, dx.
\end{equation}
So even though we cannot have exponential decay of the Fourier transform that comes with analyticity, we can still get arbitrarily close. Finally, observe that for $\eta > 0$,
\begin{equation}\label{eq:ft_int}
    \int_\eta^\infty |\hat \chi(\xi)| \, d\xi \le Ce^{-\tilde c\eta^{1/s}}
\end{equation}
for some new constant $\tilde c < c$. See \cite[Chapter 1]{Ro} for a more detailed account on Gevrey classes and their Fourier decay properties. With this stronger cutoff decay assumption, we then have the following Weyl upper bound:

\begin{theorem}\label{thm:g_weyl_law}
Assume that $\chi \in \mathcal{G}^s_{\mathrm{c}}((0, 1))$ for some $s > 1$, then for all $\nu \ge 1$ and all sufficiently large $N = KM$ where $K \in \N$,  
\begin{equation}\label{eq:g_weyl_law}
    \mathcal N_N(\nu) \le CN^\delta \nu^{(1 - \delta)s}
\end{equation}
where the constant $C$ depend only on $\chi$ and $M$.
\end{theorem}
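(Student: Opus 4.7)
The plan is to follow the overall structure of the proof of Theorem~\ref{thm:weyl_law} given in Section~\ref{sec:weyl_law_pf}, retaining the comparison strategy between $B_N^n$ and a low-rank Cantor-localized projector, but tracking every error term quantitatively by means of the Gevrey Fourier decay~\eqref{eq:ft_int}. The high-level idea, inherited from \cite{DZ}, is that for a carefully chosen iteration count $n$ one can write $B_N^n = P_n + E_n$, where $P_n$ has rank equal to the number of quantum cells (at the semiclassical scale $h = 1/(2\pi N)$) fitting inside the depth-$n$ Cantor set, and $\|E_n\|$ is small. If $\|E_n\| < \tfrac12 M^{-\nu n}$, then Weyl's inequality $|\lambda_k(B_N^n)| \le \sigma_k(B_N^n)$, together with $\sigma_{r+1}(B_N^n) \le \|E_n\|$ when $\operatorname{rank}(P_n) \le r$, forces all but at most $r$ eigenvalues of $B_N^n$ to satisfy $|\lambda^n| \le M^{-\nu n}$, which translates directly to $\mathcal N_N(\nu) \le \operatorname{rank}(P_n)$.

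The rank of $P_n$ scales like $N \cdot M^{-n(1-\delta)}$: there are $|\mathcal A|^n = M^{n\delta}$ Cantor cells of linear size $M^{-n}$, and each hosts at most $NM^{-n}$ orthogonal quantum states. Taking $n$ equal to the Ehrenfest time $\lfloor \log_M N \rfloor$ recovers the $N^\delta$ bound of Theorem~\ref{thm:weyl_law}. To produce the extra factor $\nu^{(1-\delta)s}$ in~\eqref{eq:g_weyl_law}, I would instead take $n$ to lie below Ehrenfest by an amount dictated by $\nu$, namely $n = \lceil \log_M N - s\log_M \nu \rceil$, for which a direct computation gives $\operatorname{rank}(P_n)$ on the order of $N^\delta \nu^{(1-\delta)s}$.

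The substantive step is then to verify $\|E_n\| \le \tfrac12 M^{-\nu n}$ at this choice of $n$. Via~\eqref{eq:ft_int}, the mismatch between the discretized cutoff $\chi_{N/M}$ and its continuum counterpart is governed by integrals of $\hat\chi$ past a scale $\eta_j$ comparable to $N/M^j$ at the $j$-th iterate. Expanding $B_N^n$ as a sum over words $\alpha \in \mathcal A^n$ and applying~\eqref{eq:ft_int} at each step, I expect the dominant contribution to $\|E_n\|$ to be of the form $C^n e^{-\tilde c (N/M^n)^{1/s}}$ (a per-step Gevrey error assembled additively, with at worst a Cantor-combinatorial overhead $M^{n\delta}$). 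At the prescribed $n$, $(N/M^n)^{1/s} = \nu$, so each such term is at most $C e^{-\tilde c \nu}$; one then checks, using $\nu \ge 1$ and the explicit form of $M^{-\nu n}$, that the total error stays comfortably below the threshold.

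The principal obstacle lies in the bookkeeping: correctly identifying the Gevrey scale $\eta_j$ at each propagation step and showing that the per-iterate errors assemble additively rather than multiplicatively across iterations. In particular, the tighter propagation estimates used in Section~3.1 for the proof of Theorem~\ref{thm:weyl_law} must be sharpened further by invoking~\eqref{eq:ftdecay} (Gevrey decay of $\hat\chi$) in place of generic Schwartz decay; positioning the exponent $1/s$ correctly throughout the chain of estimates is precisely what produces the exponent $(1-\delta)s$ in the final bound.
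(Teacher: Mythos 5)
Your overall strategy---propagate $\ell \approx \log_M(N/\nu^s)$ times, compare $B_N^n$ against a Cantor--localized low--rank operator, and let the Gevrey decay control the error---correctly identifies the scales, and in particular $n = \lceil\log_M N - s\log_M\nu\rceil$ matches the paper's propagation time \eqref{eq:g_ell_choice}. But the counting step has a genuine quantitative gap that prevents the argument from closing.

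First, the inequality $|\lambda_k(T)| \le \sigma_k(T)$ is false for $k > 1$: only the multiplicative form $\prod_{k\le m}|\lambda_k| \le \prod_{k\le m}\sigma_k$ holds. That is not merely a citation issue; it changes what you must demand of $\|E_n\|$. Working through the multiplicative inequality, if $\operatorname{rank}(P_n) = r$ and $\|E_n\| = \epsilon$, the number of eigenvalues of $B_N^n$ above $t$ is at most $r\bigl(1 + \log(1/t)/\log(t/\epsilon)\bigr)$, which is $\lesssim r$ only when $\epsilon \lesssim t^{1+\alpha}$ for some fixed $\alpha > 0$. With $t = M^{-\nu n}$ this means you need $\|E_n\| \lesssim M^{-(1+\alpha)\nu n}$; the threshold $\|E_n\| \le \tfrac12 M^{-\nu n}$ you posit would still leave a factor of order $\nu n \sim \nu\log N$.

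Second---and this is the fatal point---the error you actually get is not even close to $M^{-\nu n}$. At the scales you chose, the gaps must be $d_j \sim \nu^s$ so that the fattening $a_n \sim \nu^s/N$ does not inflate $\operatorname{rank}(P_n)$ past $N^\delta\nu^{s(1-\delta)}$. The per-step Gevrey error is then $\sim e^{-c\nu}$, and assembled over $n$ steps (additively, or with any polynomial-in-$n$ combinatorial overhead) you get $\|E_n\| \gtrsim e^{-c\nu}$: a quantity that does \emph{not} decay as $N \to \infty$. The threshold $M^{-\nu n} \sim (\nu^s/N)^\nu$ is a negative power of $N$. So $\|E_n\| \gg M^{-\nu n}$ for any fixed $\nu \ge 1$ and all large $N$, and the singular-value argument never starts. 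Forcing $\|E_n\| \lesssim M^{-(1+\alpha)\nu n}$ would require all gaps $d_j \gtrsim (\nu n)^s$, which inflates the rank by a spurious $(\log N)^s$ factor---strictly worse than \eqref{eq:g_weyl_law}.

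The paper sidesteps this entirely by never comparing $B_N^\ell$ to $M^{-\nu\ell}$. Instead it builds the $\lambda$-dependent approximate inverse $I = Z(\lambda)(B_N - \lambda) + \mathcal R(\lambda) + A$ (Proposition~\ref{prop:long_time}), where $\mathcal R(\lambda) = \sum_j \lambda^{-j-1} B_N^j(1 - A_{\ell-j})B_N A_{\ell - j - 1}$. The factor $\lambda^{-j-1}$, bounded by $M^{\nu(j+1)}$, is \emph{cancelled step by step} by choosing gaps that grow linearly along the propagation chain, $d_{\ell-j}^{1/s} \sim \nu(j+1)$; the per-step Gevrey error then beats the weight, and $\|\mathcal R\| \le 1/2$---a fixed constant, not a power of $N$. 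The largest gap $d_1 \sim (\nu\ell)^s$ carries a damping factor $M^{-(\ell-1)}$ in the rank formula \eqref{rank_est}, so the rank remains $\sim N^\delta\nu^{s(1-\delta)}$. Jensen's formula on the $\nu$-rescaled annulus then converts the identity into the eigenvalue count without logarithmic loss. The $j$-dependent gap schedule keyed to the $\lambda^{-j}$ weights, together with the determinant/Jensen machinery, is precisely the mechanism your proposal is missing; a direct $B_N^n = P_n + E_n$ decomposition cannot produce an error small enough relative to $M^{-\nu n}$ with Gevrey cutoffs.
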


In the study of quantum chaos, open quantum systems given by the Laplacian on a noncompact Riemannian manifold whose geodesic flow is hyperbolic on the trapped set provide an important mathematical model. In the papers by Nonnenmacher--Sj\"ostrand--Zworski \cite{NSZ11, NSZ}, the study of resonances for such open quantum system is reduced to the open quantum map, so we should expect our results to run parallel to previous Weyl upper bounds for open quantum systems. We note that in the correspondence between quantum systems and quantum maps, if $\omega \in \C$ with $\Im \omega \le 0$ is a scattering resonance of the open quantum system, then 
\begin{equation}\label{eq:correspond}
    \lambda = e^{-i \omega \log M} = M^{-i \omega}
\end{equation}
is a corresponding eigenvalue of $B_N$, which makes sense in view of the fact that $B_N$ can be thought of as a toy model for the time $t = \log M$ propagator of an open quantum system with expansion rate 1. This means that Weyl upper bounds in horizontal strips below the real line should correspond to our Weyl law in an annulus. Weyl upper bounds for resonances of the Laplacian in strips (which corresponds to annuli for the open quantum maps by \cref{eq:correspond}) were first proved by Sj\"ostrand \cite{S}. This was done in the analytic category, which we cannot afford in our case since the cutoff $\chi$ is compactly supported. Using \cref{eq:correspond}, the corresponding bound that Sj\"ostrand found would give $\mathcal N_N(\nu) = \mathcal O(N^\delta \nu^{1 - \delta})$ where $\delta$ is the Minkowski dimension of the trapped set. Since we can only assume Gevrey for $s > 1$ in our setting, we see a corresponding loss in our result as we only have $\mathcal O(N^\delta \nu^{s(1 - \delta)})$. However, we remark that it appears from numerical experiments in \S \ref{numerics} that if the cutoff is identically $1$ near the trapped set, the Sj\"ostrand bound of $\mathcal O(N^\delta \nu^{1 - \delta})$ is recovered for $\nu$ not too large (\cref{fig:g_weyl_law}), but our methods do not appear to be able to account for this behavior. 

Weyl upper bounds for the Laplacian in fixed strips have been proved in various smooth settings by Guillop\'e--Lin--Zworski \cite{GLZ}, Zworski \cite{Zw99}, Sj\"ostrand--Zworski \cite{SjZw}, Nonnenmacher--Sj\"ostrand--Zworski \cite{NSZ11, NSZ}, and Datchev--Dyatlov \cite{DaDy}. These give the corresponding bounds $\mathcal N_N(\nu) = \mathcal O(N^\delta)$, which aligns with our result in \cref{thm:weyl_law}. Physical microwave experiments on the Weyl law asymptotics have been done by Potzuweit et al. \cite{PWBKSZ}, and various numerical experiments can be found in, Lu--Sridhar--Zworski \cite{LSZ}, Borthwick--Weich \cite{BoWe}, Borthwick \cite{Bor}, and Borthwick--Dyatlov--Weich \cite{Dy15b}. The main idea behind deriving the Weyl upper bounds is the localization of the eigenfunctions in phase space (see \cref{prop:long_time}), and this was observed numerically by Keating et al. \cite{KNPS}. In the seeting of Walsh quantization, which uses a modified Fourier transform, the localization was later proved by Keating et al. \cite{KNNS}

\section{Open quantum maps}
In this section, we establish some basic definitions and a general nonstationary phase estimate. We then give a more detailed definition of the quantum open baker's map, and use the nonstationary phase estimate to prove the one-step propagation estimate for the quantum open baker's map $B_N$ that will be iterated in order to get propogation of singularities estimates for long times.  

\subsection{Preliminaries}
For $N \in \N$, we have the abelian group 
\[\Z_N := \Z/N\Z \simeq \{0, \dots, N - 1\},\]
and we have the associated $\ell^2_N = \ell^2(\Z_N)$ space of functions $u: \Z_N \to \C$ equipped with the norm 
\[\|u \|_{\ell^2_N}^2 = \sum_{j = 0}^{N - 1} |u(j)|^2.\]
The unitary Fourier transform on $\ell^2_N$ is given by 
\[\mathcal F_Nu(j) = \frac{1}{\sqrt{N}} \sum_{\ell = 0}^{N - 1} \exp\Big(-\frac{2 \pi i j \ell}{N} \Big) u(\ell).\]
Given a function $\varphi:[0, 1] \to \C$, its discretization is a function denoted $\varphi_N \in \ell^2_N$ given by 
\begin{equation}
    \varphi_N(j) = \varphi\Big( \frac{j}{N} \Big), \quad j \in {0, \dots, N - 1}.
\end{equation}
We denote the corresponding Fourier multiplier by
\begin{equation}\label{eq:fourier_mult}
    \varphi_N^\F = \F_N^* \varphi_N \F_N
\end{equation}
For the distance function on $[0, 1]$, we consider the interval with $0$ and $1$ identified. In particular, 
\[d(x, y) = \min\{|x - y|, 1 - |x - y|\}.\]
For the distance between sets, we have the usual
\[d(U, V) = \inf_{x \in U,\, y \in V} d(x, y).\]

Now we have the following nonstationary phase estimate. 
\begin{lemma}\label{lem:nonstationary}
	Fix $\chi \in C_c^\infty((0, 1))$. Assume that $a \in \Z_N$ and
	\[d \left( \frac{a}{N}, 0 \right) \ge r\]
	for some $r \in (0, 1/2)$. Then
	\begin{equation}\label{eq:nonstationaryphase}
	    \left|\sum_{m = 0}^{N - 1} \exp \left( \frac{2 \pi i am}{N} \right) \chi \left(\frac{m}{N} \right) \right| \le N \cdot g_\chi(Nr)
	\end{equation}
	where 
	\begin{equation}
	    g_\chi(x) \le \begin{cases} C_nx^{-n} \quad \forall n & \text{if } \varphi \in C_c^\infty((0, 1)) \\ Ce^{-cx^{1/s}} & \text{if } \varphi \in \mathcal{G}^s_{\mathrm{c}}((0, 1)) \end{cases}
	\end{equation}
	The positive constants $C_n$, $C$, and $c$ depend only on the choice of $\chi$. 
\end{lemma}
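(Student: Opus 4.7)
The plan is to apply the Poisson summation formula in order to recognize the discrete sum as a sum of translates of $\hat\chi$, and then exploit the decay of $\hat\chi$ in the two regularity classes. Define the auxiliary function $f(x) = \chi(x/N)\,e^{2\pi i ax/N}$ on $\R$; since $\chi \in C_c^\infty((0,1))$, the function $f$ is supported in $(0,N)$, and so
\begin{equation*}
\sum_{m=0}^{N-1} e^{2\pi iam/N}\chi(m/N) \;=\; \sum_{m \in \Z} f(m) \;=\; \sum_{k \in \Z} \hat f(k).
\end{equation*}
The change of variable $y = x/N$ combined with the convention \eqref{eq:cft} gives $\hat f(k) = N \hat\chi(Nk - a)$, so matters reduce to bounding $\sum_{k \in \Z} |\hat\chi(Nk - a)|$ by $g_\chi(Nr)$.

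The hypothesis $d(a/N, 0) \ge r$ is precisely that $Nr \le a \le N - Nr$, so the two integer multiples of $N$ closest to $a$ are $0$ and $N$, and $|Nk - a| \ge Nr$ for every $k \in \Z$. A direct case check shows $|Nk - a| \ge (|k|-1)N$ for $|k| \ge 2$. In the smooth case, the estimate $|\hat\chi(\xi)| \le C_n(1+|\xi|)^{-n}$ applied termwise then gives
\begin{equation*}
\sum_{k \in \Z} |\hat\chi(Nk - a)| \;\le\; 2 C_n (Nr)^{-n} \;+\; C_n \sum_{|k|\ge 2} ((|k|-1)N)^{-n},
\end{equation*}
and the tail is absorbed into the first term using $Nr \le N/2$. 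In the Gevrey case, \eqref{eq:ftdecay} gives $|\hat\chi(Nk - a)| \le C e^{-c|Nk-a|^{1/s}}$; because $(Nr)^{1/s} \le N^{1/s}$, I can sacrifice an arbitrarily small fraction of the exponent constant to dominate the geometric tail over $|k| \ge 2$ and arrive at $\sum_{k} |\hat\chi(Nk-a)| \le C' e^{-\tilde c (Nr)^{1/s}}$ for any $\tilde c < c$, as required.

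The main obstacle is really only bookkeeping: one has to separate the two resonant terms $k \in \{0, 1\}$, where $|Nk - a|$ can be as small as $Nr$, from the exponentially-small tail over $|k| \ge 2$, and check that the tail is subordinate to $g_\chi(Nr)$ rather than producing an extraneous $g_\chi(N)$ remainder. Beyond this, the argument is a clean application of Poisson summation combined with the Fourier decay properties of $\chi$ already cataloged in the introduction.
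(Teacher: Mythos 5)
Your argument is correct and follows the same route as the paper: apply Poisson summation to rewrite the discrete sum as $N\sum_{k\in\Z}\hat\chi(Nk-a)$, use the hypothesis $d(a/N,0)\ge r$ to bound $|Nk-a|\ge Nr$ for all $k$, and then invoke the rapid (resp.\ Gevrey) decay of $\hat\chi$ to absorb the tail into the leading term. The only cosmetic difference is that you explicitly split off the two resonant terms $k\in\{0,1\}$, whereas the paper folds all terms into a single majorizing sum $2\sum_{\ell\ge 0}(N(r+\ell))^{-n}$; the estimates are identical.
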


\begin{proof}
	The Poisson summation formula gives
	\begin{equation*}
	    \sum_{m = 0}^{N - 1} \exp\left( \frac{2 \pi i am}{N} \right) \chi \left( \frac{m}{N} \right) = N \sum_{\ell \in \Z} \hat \chi(N \ell - a).
	\end{equation*}
	Note that $\hat \chi$ is rapidly decaying and by our assumption, $N\ell - a > Nr$. Therefore, for every $n \ge 0$,
	\begin{align*}
	    \Big | N\sum_{\ell \in \Z} \hat \chi(N \ell - a) \Big| &\le C N \sum_{\ell \in \Z} (N \ell - a)^{-n} \\
	    &\le 2CN \sum_{\ell \ge 0} (N(r + \ell)) ^{-n} \, \\
	    &\le C_n N(Nr)^{-n}
	\end{align*}
	where the constant depends only on $n$. Similarly, if $\chi \in \mathcal{G}^s_{\mathrm{c}}((0, 1))$ for some $s > 1$, then in view of \cref{eq:ftdecay} and \cref{eq:ft_int},  
	\begin{align*}
	    \Big|N\sum_{\ell \in \Z} \hat \chi(N \ell - a) \Big| &\le C N \sum_{\ell \in \Z} \exp\left(-c|N \ell - a|^{\frac{1}{s}} \right) \\
	    &\le 2CN \cdot \exp\left(-c(Nr)^{\frac{1}{s}} \right) + 2 C \int_{Nr}^\infty \exp\left(-c \xi^{\frac{1}{s}} \right) \, d \xi \\
	    &\le \tilde C N\exp\left(-\tilde c (Nr)^{\frac{1}{s}}\right),
	\end{align*}
	where we interpret the sum as a lower Riemann sum to bound by the integral. Again, all the constants above depend only on $\chi$. Therefore we have both of the desired estimates. 
\end{proof}

\subsection{One-step propagation}
Let the triple $(M, \mathcal A, \chi)$ be as in \cref{eq:triple}, and put $N = K M$ where $K \in \mathcal \N$. Define the projection $\Pi_a: \ell^2_N \to \ell^2_{N/M}$, $a \in \{0, \dots, M - 1\}$, by
\begin{equation}
    \Pi_a u(j) = u \Big(j + a K \Big), \quad u \in \ell^2_N, j \in \Big\{0, \dots, K - 1 \Big\}. 
\end{equation}
Then the open quantum map $B_N: \ell^2_N \to \ell^2_N$ can be written as 
\begin{equation*}
    B_N = \sum_{a \in \mathcal A} \mathcal F^*_N \Pi_a^* \chi_{N/M} \mathcal F_{N/M} \chi_{N/M} \Pi_a.
\end{equation*}
Expanding out the Fourier transforms, we have the formula 
\begin{multline}\label{eq:full_expansion}
    B_Nu(j) = \frac{\sqrt{M}}{N} \sum_{a \in \mathcal A} \sum_{m, \ell = 0}^{K - 1} \exp\left[2 \pi i \left(\frac{(j - M\ell)m}{N} + \frac{ja}{M} \right)\right] \\ \chi \Big(\frac{m}{K} \Big) \chi \Big(\frac{\ell}{K} \Big)u(\ell + aK). 
\end{multline}
It will be useful in the propagation estimates to define the expanding map
\[\Phi = \Phi_{M, \mathcal A}: \bigsqcup_{a \in \mathcal A} \Big(\frac{a}{M}, \frac{a + 1}{M} \Big) \to (0, 1)\]
given by 
\begin{equation}\label{eq:expand_map}
    \Phi(x) = Mx - a, \quad x \in \Big(\frac{a}{M}, \frac{a + 1}{M} \Big)
\end{equation}

We will obtain estimates in terms of the constants $C_n$ for the propagation of singularities. In particular, we start by showing that by applying $B_N$ once, the resulting function will be roughly microlocalized to $\varkappa_{M, \mathcal A}((0, 1)^2)$, and by applying $B_N^*$ once, the resulting function will be roughly microlocalized to $\varkappa_{M, \mathcal A}^{-1} ((0, 1)^2)$ (of course, this is imprecise since our setting is discrete). This localization behavior is clear in the classical open baker's map $\varkappa_{M, \mathcal A}$. In the discrete setting, it is then natural to consider $B_N$ as a matrix consisting of blocks that reflect the classical structure of the baker's map, and each block will be rapidly decaying away from the diagonal, so then we can apply Schur's bound to control the norm. To make precise the above heuristics, we have the following estimate.

\begin{proposition}\label{prop:gap_est}
	Assume that $\varphi, \psi:[0, 1] \to [0, 1]$ such that 
	\begin{equation}\label{eq:propogationspacing}
	    d(\supp \psi, \Phi^{-1}(\supp \varphi)) \ge r
	\end{equation}
	where $\Phi$ is the expanding map as defined in \cref{eq:expand_map} and $r$ is a small gap satisfying
	\begin{equation}\label{eq:r_cond}
	    0 < Mr \le 2d(\supp \chi, 0).
	\end{equation}
	Let $\psi_N^\F$ and $\varphi_N^\F$ are Fourier multipliers as defined in \textup{(\ref{eq:fourier_mult})}. Then
	\begin{gather*}
		\|\varphi_N B_N \psi_N \|_{\ell_N^2 \to \ell_N^2} \le \tilde g_\chi(Nr) \\
		\|\psi_N^{\F} B_N \varphi_N^\F \|_{\ell_N^2 \to \ell_N^2} \le \tilde g_\chi(Nr)
	\end{gather*}
	where
	\begin{equation}\label{eq:tilde_g}
	    \tilde g_\chi(x) \le \begin{cases} C_n x^{-n} \quad \forall n & \text{if } \varphi \in C_c^\infty((0, 1)) \\ Ce^{-cx^{1/s}} & \text{if } \varphi \in \mathcal{G}^s_{\mathrm{c}}((0, 1)) \end{cases}
	\end{equation} 
	where $C_n$, $c$, and $C$ are positive constants dependending only on $\chi$. 
\end{proposition}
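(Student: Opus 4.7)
The plan is to apply Schur's test to the explicit matrix of $\varphi_N B_N \psi_N$ obtained from \cref{eq:full_expansion}. For $k = \ell + aK$ with $a \in \mathcal A$ and $\ell \in \{0, \ldots, K-1\}$, the $(j,k)$-entry factors as $\varphi(j/N) \psi(k/N) \chi(\ell/K) \tfrac{\sqrt M}{N} e^{2\pi i ja/M} S(j,\ell)$, where $S(j,\ell) = \sum_{m=0}^{K-1} e^{2\pi i(j - M\ell)m/N}\chi(m/K)$ is the oscillatory sum in $m$. Poisson summation identifies $S(j,\ell) = K\sum_{p \in \Z} \hat\chi(Kp - (j-M\ell)/M)$, which is a lattice sum with spacing $K$ centered at the real frequency $(j-M\ell)/M$.

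First I would translate the support-gap condition into a torus-distance bound on the effective frequency $(j - M\ell)/N \in \mathbb T$. Applying $d(\supp\psi, \Phi^{-1}(\supp\varphi)) \ge r$ with the specific preimage branch $a' = a$ gives $|j - M\ell| \ge rNM$ in ordinary distance; the cutoff $\chi(\ell/K) \ne 0$ forces $\ell/K \in \supp\chi$, and combined with the hypothesis $Mr \le 2 d(\supp\chi, 0)$ this rules out torus wrap-around, so $d_{\mathbb T}((j - M\ell)/N, 0) \ge Mr/2$. Applying \cref{lem:nonstationary} to the $m$-sum (with $N$ there replaced by $K$) gives the coarse bound $|S(j,\ell)| \le K g_\chi(Nr/2)$ and, more importantly, the $\ell$-dependent Poisson bound $|B_N(j,k)| \lesssim C_n K^{-n} d(j,\ell)^{-n}/\sqrt M$ in the $C^\infty_c$ case, where $d(j,\ell) := d_{\mathbb T}((j - M\ell)/N, 0)$.

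The naive Schur estimate using $|B_N(j,k)| \lesssim g_\chi(Nr/2)/\sqrt M$ uniformly loses a factor of $N$ and is insufficient, so I would use the sharper $\ell$-refined bound. As $\ell$ varies over $\{0, \ldots, K-1\}$ with $j$ fixed, $(j - M\ell)/N$ traces an arithmetic progression on $\mathbb T$ of spacing $1/K$; hence $d(j,\ell)$ forms a triangle-wave sequence of spacing $1/K$ bounded below by $Mr/2$, and an integral comparison yields $\sum_\ell d(j,\ell)^{-n} \lesssim K^n (Nr)^{1-n}$. The $K^{-n}$ in the entry bound cancels precisely, giving row-sum $\sum_k |A(j,k)| \lesssim \sqrt M(Nr)^{1-n}$. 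An analogous calculation for the column-sum uses lattice spacing $1/N$ in the $j$-variable; the resulting $\sum_j d(j,\ell)^{-n} \lesssim N^n (Nr)^{1-n}$ combined with $K^{-n}$ leaves a residual factor $M^n$ which is absorbed into the constant. Schur's test thus yields $\|\varphi_N B_N \psi_N\| \le C_n (Nr)^{-(n-1)}$ for every $n \ge 1$, which is the required $\tilde g_\chi$ decay. The Gevrey case is handled identically, with the polynomial decay of $\hat\chi$ replaced by the exponential decay of \cref{eq:ftdecay}, and the summation controlled by \cref{eq:ft_int}.

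The second inequality reduces to the first via the algebraic identity $\F_N B_N \F_N^* = B_N^T$, which can be verified directly from \cref{eq:full_expansion}: the inner $j$-sum in the conjugation produces a Kronecker delta fixing $(a,m) = (\lfloor j'/K\rfloor, j' \bmod K)$, and the remaining exponent collapses to exactly the matrix entry $B_N(k',j')$. Since $\varphi_N$ and $\psi_N$ are diagonal and hence self-transpose, and the operator norm is invariant under transpose and unitary conjugation,
\[
\|\psi_N^\F B_N \varphi_N^\F\| = \|\psi_N (\F_N B_N \F_N^*) \varphi_N\| = \|\psi_N B_N^T \varphi_N\| = \|(\varphi_N B_N \psi_N)^T\| = \|\varphi_N B_N \psi_N\|,
\]
and the first inequality applies. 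The main obstacle I anticipate is the triangle-wave summation $\sum_\ell d(j,\ell)^{-n} \asymp K^n(Nr)^{1-n}$: the cancellation between the $K^{-n}$ from the per-entry Poisson bound and the $K^n$ from the $\ell$-summation is what allows the $N$-factor loss in the naive Schur estimate to be absorbed, and one must verify the lattice spacings ($1/K$ in $\ell$, $1/N$ in $j$) carefully so that only constants depending on $\chi$ and $M$ survive.
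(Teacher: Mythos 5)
Your proposal is correct and follows essentially the same approach as the paper's proof: compute the matrix entries from \cref{eq:full_expansion}, use Poisson summation (which the paper packages as \cref{lem:nonstationary}) to obtain the $\ell$-dependent off-diagonal decay of the entries, apply Schur's test by summing over rows and columns, and reduce the Fourier-side bound to the position-side bound via the transpose identity $\F_N B_N \F_N^* = B_N^T$. The only cosmetic differences are that you carry out the Poisson summation by hand rather than citing the lemma, and you articulate the Schur row/column sums in terms of the lattice spacings $1/K$ and $1/N$ of the phase $(j - M\ell)/N$, which is the same computation the paper performs in \cref{eq:schurhorizontal} and \cref{eq:schurvertical}.
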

\begin{proof}
	1. We computed each entry of $\varphi_N B_N \psi_N$ as an $N \times N$ matrix, $N = KM$ for some $K \in \N$. From the expansion \cref{eq:full_expansion} for $B_N$, we can write 
	\begin{equation*}
	    \varphi_N B_N \psi_N u(j) = \sum_{a \in \mathcal A} \sum_{\ell = 0}^{K - 1} A_{j\ell}^a u(\ell + aK)
	\end{equation*}
	where
	\begin{align*}
	    A_{j\ell}^a &= \frac{\sqrt{M}}{N} \varphi \Big( \frac{j}{N} \Big) \exp \Big( \frac{2 \pi i aj}{M} \Big) \chi\Big(\frac{\ell}{K} \Big) \psi\Big( \frac{\ell}{N} + \frac{a}{M} \Big) \widetilde A_{j\ell} \\
	    \widetilde A_{j\ell} &= \sum_{m = 0}^{K - 1} \exp \Big( \frac{2 \pi i m(j - \ell M)}{N} \Big)\chi \Big(\frac{m}{K} \Big)
	\end{align*}
	Observe that $A^a_{j\ell}$ can be nonzero only when 
	\begin{equation}\label{eq:awayfromdiag}
	    a \in \mathcal A, \qquad \frac{j}{N} \in \supp \varphi, \qquad \frac{\ell}{N} + \frac{a}{M} \in \supp \psi, \qquad \frac{\ell}{K} \in \supp \chi.
	\end{equation}
	For $j$ and $\ell$ such that condition (\ref{eq:awayfromdiag}) holds, it follows from \cref{eq:propogationspacing} that
	\begin{equation}\label{eq:lowerdiagbound}
	    d \Big(\frac{j - \ell M}{N}, 0 \Big) \ge \min\{ M\cdot r, 2d(\supp \chi, 0)\} \ge M \cdot r.
	\end{equation}
	Here, note that we crucially used condition (\ref{eq:r_cond}) on $r$, which controls the case that $\supp \psi$ contains a neighborhood of $ak/M$ and $\supp \Phi^{-1}(\varphi)$ contains a neighborhood of $(a+1)k/M$ or vice versa for some $a \in \mathcal A$. 
	
	\noindent
	2. We now use Schur's bound (see for instance \cite[\S 4.5.1]{Zw}) to bound the operator of $\varphi_N B_N \psi_N$. In particular, it suffices to show that 
	\begin{gather*}
	    \max_{0 \le j \le N - 1} \sum_{a \in \mathcal A} \sum_{\ell = 0}^{K - 1} |A_{j\ell}^a| \le \tilde g_\chi(Nr) \\
	    \max_{\substack{a \in \mathcal A \\ 0 \le \ell \le N/M - 1}} \sum_{j = 0}^{N - 1} |A_{j\ell}^a| \le \tilde g_\chi(Nr)
	\end{gather*}
	for some $\tilde g_\chi$ that satisfies (\ref{eq:tilde_g}) in order to conclude $\|\varphi_N B_N \psi_N\|_{\ell^2_N \to \ell^2_N} \le \tilde g_\chi(Nr)$. 
	
	Let $g = g_{\chi_M}$ as in \cref{lem:nonstationary} where $\chi_M(x) = \chi(Mx)$. Then for any $a \in \mathcal A$ and $j \in \{0, \dots, N - 1\}$, (\ref{eq:awayfromdiag}) and (\ref{eq:lowerdiagbound}) means that the conditions of \cref{lem:nonstationary} are satisfied, so
	\begin{align}
	    \sum_{\ell = 0}^{K - 1} |A_{j \ell}^a| &\le \frac{\sqrt{M}}{N} \sum_{\ell = 0}^{K - 1} |\widetilde A_{j \ell} | \nonumber \\
	    &\le \sqrt{M} \sum_{\ell : \, d(\frac{j - \ell M}{N}, 0) \ge Mr} g(j - \ell M) \nonumber \\
	    &\le 2\sqrt{M} \sum_{\ell \ge Nr} g(\ell M) \label{eq:schurhorizontal}
	\end{align}
    Similarly, for any $\ell$ and $a$, we have that
	\begin{align}
	    \sum_{j = 0}^{N - 1} |A^a_{j \ell}| &\le \frac{\sqrt{M}}{N} \sum_{j = 0}^{N - 1} |\widetilde A_{j \ell}| \nonumber \\
	    &\le \sqrt{M} \sum_{j : \, d(\frac{j - \ell M}{N}, 0) \ge Mr} g(j - \ell M) \nonumber \\
	    &\le 2M^{3/2} \sum_{j \ge NMr} g(j) \label{eq:schurvertical}
	\end{align}

	\noindent
	3. Now we substitute in the relevant $g_\chi$ into the bounds from Step 2 to recover the desired estimates. With no extra assumptions on $\chi$, $g$ decays rapidly. Then it follows from \cref{eq:schurhorizontal} and \cref{eq:schurvertical}
	\begin{equation}\label{eq:schur}
	    \begin{gathered}
	    \sum_{a \in \mathcal A} \sum_{\ell = 0}^{K - 1} |A^a_{j \ell}| \le C_n \sum_{\ell \ge Nr}(\ell M)^{-n - 1} \le C_n (Nr)^{-1} \\
	    \sum_{j = 0}^{N - 1} |A_{j\ell}^a| \le C_n \sum_{j \ge NMr} g(j) \le C_n (Nr)^{-1}
	    \end{gathered}
	\end{equation}
	where $C_n$ can change from line to line but depends only on $n$, $\chi$, and $M$. On the other hand, if $\chi \in \mathcal{G}^s_{\mathrm{c}}((0, 1))$, then $g \le C\exp(-cx^{\frac{1}{s}})$. Therefore, it follows from \cref{eq:schurhorizontal} and \cref{eq:schurvertical} that 
	\begin{equation}\label{eq:g_schur}
	    \begin{gathered}
	    \sum_{a \in \mathcal A} \sum_{\ell = 0}^{K - 1} |A^a_{j \ell}| \le C \int_{Nr}^\infty \exp \left(-cx^{\frac{1}{s}}\right) \, dx \le \tilde C \exp\left(-\tilde cx^{\frac{1}{s}} \right) \\
	    \sum_{j = 0}^{N - 1} |A^a_{j \ell}| \le C \int_{NMr}^\infty \exp \left(-cx^{\frac{1}{s}}\right) \, dx \le \tilde C\exp\left(- \tilde cx^{\frac{1}{s}}\right)
	    \end{gathered}
	\end{equation}
	where the constants depend only on $\chi$ and $M$. Here, the sum can be seen as a lower Riemann sum, which is bounded by the corresponding integral. Therefore, by Schur's estimate, \cref{eq:schur} and \cref{eq:g_schur} yield the desired estimates on $\varphi_N B_N \psi_N$. To obtain the estimates on the Fourier side, we simply have 
	\begin{equation*}
	    \| \psi_N^\F B_N \varphi_N^\F\|_{\ell^2_N \to \ell^2_N} = \|\F_N^* (\overline{\varphi_N B_N \psi_N})^* \F_N\|_{\ell^2_N \to \ell^2_N} \le \|\varphi_N B_N \psi_N\|_{\ell^2_N \to \ell^2_N},
	\end{equation*}
	which gives the identical bounds for the Fourier side. 
\end{proof}

The manifestation of this propagation estimate is clear in \cref{fig:propagation}. Each time a random function over $\Z_N$ is propagated by $B_N$, it localizes in frequency space to the next Cantor subset, and similarly propagation by $B_N^*$ yields localization in physical space to the next Cantor subset. 

\section{Propagation of singularities}
Now we are in a position to iteratively apply the one-step propagation estimate \cref{prop:gap_est} to obtain bounds on propagation for a long time. First, we will derive a general estimate for long time propagation. The general estimate will then be applied to the case $\chi \in C_c^\infty((0, 1))$, and then to the Gevrey case $\chi \in \mathcal{G}^s_{\mathrm{c}}((0, 1))$ for $s > 1$. 
\subsection{Long-time propagation}
\label{sec:no_decay}
Let $N = K M$ for $K \in \N$ and let $\Phi$ denote the expanding map as defined in the (\ref{eq:expand_map}). Define the fattened Cantor set 
\begin{equation}\label{eq:fat_Cantor}
    X_j := \{\Phi^{-j}(x) + y \mod 1:\, x \in [0, 1], |y| \le a_j\} = \Phi^{-j}([0, 1]) + [-a_j, a_j].
\end{equation}
The gap $a_j$ will be adjusted later. For now, we only need to assume that $a_j > a_{j - 1}/M$. Set 
\begin{equation}\label{eq:localizer}
    A_j = (\indic_{X_j})_N^{\F}.
\end{equation}
Roughly speaking, $A_j$ is a localizing operator on the Fourier side that localizes $N \cdot a_j$-close to the $j$-th discrete Cantor subset in $\Z_N$. We remark that the discrete Cantor subsets of $\Z_N$ are generally not defined in our setting since we do not assume that $N$ is a power of $M$. We only assume that $N$ is a multiple of $M$ in order to ensure that $B_N$ is well-defined. However, this is not a problem since the fattened Cantor sets are simply defined on the continuum and then discretized. 

Note that 
\[d(\Phi^{-1}(X_{j - 1}), [0, 1] \setminus X_j) \le a_j - \frac{a_{j - 1}}{M}.\] 
Define the gap distance by $d_1 = a_1$ and 
\begin{equation}\label{eq:distance}
    d_j = N \cdot \left( a_{j} - \frac{a_{j-1}}{M} \right)
\end{equation}
for $j \ge 2$. Therefore, by \cref{prop:gap_est}, we have estimates of the form 
\begin{equation}\label{localization}
	(1 - A_j) B_N A_{j - 1} = R_j
\end{equation}
where
\[\|R_j\|_{\ell^2_N \to \ell^2_N} \le \tilde g_\chi \left(d_j \right).\]
provided that condition (\ref{eq:r_cond}) holds, i.e.
\begin{equation}\label{r_cond'}
    0 < \frac{d_j}{N} \le \frac{2}{M} d(\supp \chi, 0) 
\end{equation}
We propagate the estimate (\ref{localization}) to obtain long time estimates in the following proposition.

Define the annular domain 
\begin{equation}\label{eq:Omega}
    \Omega_\nu:= \{M^{-\nu} < |\lambda| < 5 \} \subset \C.
\end{equation}

\begin{proposition}\label{prop:long_time}
Let $N = K M$ for some $K \in \N$. Fix a sequence 
\[\{d_j\}_{j = 1}^\ell, \qquad \ell \le \frac{\log N}{\log M}\] 
such that the condition \textup{(\ref{r_cond'})} holds. Then there exists a Fourier multiplier 
\[A:\ell^2_N \to \ell^2_N\]
and families of operators 
\[Z(\lambda):\ell^2_N \to \ell^2_N \qquad \mathcal R(\lambda):\ell^2_N \to \ell^2_N\]
that satisfy the identity 
    \begin{equation}\label{eq:long-time-prop}
        I = Z(\lambda) (B_N - \lambda) + \mathcal R(\lambda) + A
    \end{equation}
such that 
\begin{enumerate}\label{eq:remainder_est}
    \item we have the remainder estimate 
    \begin{equation}\label{eq:mathcal_R}
        \|\mathcal R(\lambda)\|_{\ell^2_N \to \ell^2_N} \le \sum_{j = 0}^{\ell - 1} |\lambda|^{-j - 1} \tilde g_\chi(d_{\ell - j})
    \end{equation}
    where $\tilde g_\chi$ is the same as in \cref{eq:tilde_g}. 
    \item $A$ has rank bounded by 
    \begin{equation}\label{rank_est}
        \rank A \le 2M^{\ell \delta} \left[ \frac{N}{M^\ell} + 2\sum_{k = 1}^\ell \frac{d_k}{M^{\ell - k}} \right]
    \end{equation}
\end{enumerate}

\end{proposition}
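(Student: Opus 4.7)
The plan is to take $A := A_\ell$ and iterate the one-step propagation identity \eqref{localization} exactly $\ell$ times, threading through the nested localizers $A_0, A_1, \ldots, A_\ell$ to build a discrete parametrix for $B_N - \lambda$ modulo this single finite rank projection. Writing $T_j := I - A_j$, identity \eqref{localization} becomes $T_j B_N A_{j-1} = R_j$, equivalently $T_j B_N = R_j + T_j B_N T_{j-1}$. Combining this with the trivial rewrite $T_j = \lambda^{-1} T_j B_N - \lambda^{-1} T_j(B_N - \lambda)$ gives the core recursion
\[
T_j \;=\; \lambda^{-1} R_j \;+\; \lambda^{-1} T_j B_N T_{j-1} \;-\; \lambda^{-1} T_j(B_N - \lambda).
\]
A key observation that terminates the iteration cleanly is that $X_0 = [0,1]$ (taking $a_0 = 0$), so $A_0 = I$ and $T_0 = 0$.

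Next, I would iterate this recursion starting from $T_\ell$, at each stage substituting for the new $T_{j-1}$ that appears. Setting $W_0 := I$ and $W_j := T_\ell B_N T_{\ell-1} B_N \cdots T_{\ell-j+1} B_N$ for $j \ge 1$, after $k$ substitutions I would expect
\[
T_\ell \;=\; \sum_{j=0}^{k-1} \lambda^{-j-1} W_j R_{\ell-j} \;-\; \sum_{j=0}^{k-1} \lambda^{-j-1} W_j T_{\ell-j}(B_N - \lambda) \;+\; \lambda^{-k} W_k T_{\ell-k}.
\]
Taking $k = \ell$ kills the trailing term thanks to $T_0 = 0$, and rearranging yields \eqref{eq:long-time-prop} with
\[
Z(\lambda) \;=\; -\sum_{j=0}^{\ell-1} \lambda^{-j-1} W_j T_{\ell-j}, \qquad \mathcal R(\lambda) \;=\; \sum_{j=0}^{\ell-1} \lambda^{-j-1} W_j R_{\ell-j}.
\]

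For the norm bound \eqref{eq:mathcal_R}, I would use that $\|B_N\| \le 1$ (product of unitaries, a $\{0,1\}$-projection and cutoffs bounded by $1$), and each $T_j$ is a Fourier multiplier by a $\{0,1\}$-valued vector so $\|T_j\| \le 1$; hence $\|W_j\| \le 1$, and \cref{prop:gap_est} gives $\|R_j\| \le \tilde g_\chi(d_j)$. For the rank estimate \eqref{rank_est}, the set $X_\ell$ sits inside a union of $|\mathcal A|^\ell = M^{\ell \delta}$ intervals of length at most $M^{-\ell} + 2a_\ell$; unrolling the recursion $a_j = d_j/N + a_{j-1}/M$ gives $N a_\ell = \sum_{k=1}^\ell d_k/M^{\ell-k}$, and the rank of the Fourier multiplier $A_\ell$ equals the number of lattice points $j/N \in X_\ell$, bounded by $M^{\ell\delta}\bigl(N(M^{-\ell} + 2a_\ell) + 1\bigr)$. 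Since $\ell \le \log N / \log M$ ensures $N \ge M^\ell$, the stray $+1$ is absorbed and \eqref{rank_est} follows.

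The only real obstacle is purely bookkeeping: each substitution produces three terms and shifts the $T$-index by one, so I need to carefully verify that the telescoping sum closes properly on iterating all the way down to $T_0$. The fact that $T_0 = 0$ is precisely what eliminates the otherwise leftover $\lambda^{-\ell} W_\ell T_0$ term and leaves a clean identity with no uncontrolled remainder.
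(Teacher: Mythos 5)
Your proof is correct and takes essentially the same approach as the paper: iterate the one-step propagation estimate \cref{prop:gap_est} through the nested Fourier localizers $A_j$, use $A_0 = I$ (so $T_0 = 0$) to terminate the recursion cleanly, set $A = A_\ell$, bound $\mathcal R(\lambda)$ term-by-term using $\|B_N\| \le 1$ and $\|T_j\| \le 1$, and count lattice points in the fattened Cantor set for the rank bound. Your descending recursion on $T_j$ with the cumulative products $W_j$ is a tidier bookkeeping of the paper's expansion of $(1-A_\ell)B_N^\ell$ followed by the geometric-series trick, and your displayed $\mathcal R(\lambda) = \sum_{j} \lambda^{-j-1} W_j R_{\ell-j}$ is precisely the operator the paper's algebra produces before its simplification, with the same norm bound.
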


\begin{proof}
We obtain the identity from iterating propagation estimate \cref{localization}. Put
\begin{equation}\label{eq:a_j}
    a_j = \frac{1}{N} \sum_{k = 1}^j \frac{d_k}{M^{j - k}}
\end{equation}
so that
\[d_j = N \cdot \Big (a_j - \frac{a_{j - 1}}{M} \Big).\]
Then we can form the fattened Cantor sets $X_j$ as in \cref{eq:fat_Cantor} with the corresponding Fourier localizers $A_j$ defined in \cref{eq:localizer}. Iterating the estimate \cref{localization} $\ell$-times,
we find
\begin{align*}
    (1 - A_\ell) B_N^\ell =& (1 - A_\ell) B_N A_{\ell - 1} B_N^{\ell - 1} + (1 - A_\ell)B_N (1 - A_{\ell - 1}) B_N^{\ell - 1} \\
    =&\sum_{j = 0}^{\ell - 1} (1 - A_\ell)B_N(1 - A_{\ell - 1}) B_N \dots (1 - A_{\ell - j}) B_N A_{\ell - j - 1} B_N^{\ell - j - 1} \\
    =& E_\ell(B_N - \lambda) \\
    & \qquad + \sum_{j = 0}^{\ell - 1} \lambda^{\ell - j - 1} (1 - A_\ell)B_N(1 - A_{\ell - 1}) B_N \dots (1 - A_{\ell - j}) B_N A_{\ell - j - 1}
\end{align*}
where
\[E_\ell = \sum_{j = 0}^{\ell - 1} \sum_{k = 0}^{\ell - j - 2} (1 - A_\ell)B_N(1 - A_{\ell - 1}) B_N \dots (1 - A_{\ell - j}) B_N A_{\ell - j - 1}(\lambda^k B^{\ell - j - 2 - k}). \]
Now, we have the desired approximate inverse identity given by 
\begin{align}
    I &= -\left(\sum_{0 \le k < \ell} \lambda^{-1-k} (I - A_\ell)(B_N)^k \right)(B_N - \lambda) + \lambda^{-\ell} (1 - A_\ell)(B_N)^\ell + A_\ell \nonumber\\
    &= -\left(\sum_{0 \le k < \ell} \lambda^{-1-k} (I - A_\ell)(B_N)^k \right)(B_N - \lambda) + \lambda^{-\ell} E_\ell(B - \lambda) \nonumber\\
    & \quad + \lambda^{-\ell}\sum_{j = 0}^{\ell - 1} \lambda^{\ell - j - 1} (1 - A_\ell)B_N(1 - A_{\ell - 1}) B_N \dots (1 - A_{\ell - j}) B_N A_{\ell - j - 1} + A_\ell \nonumber \\
    &= Z_\ell (B_N - \lambda) + \mathcal R_\ell + A_\ell \label{eq:approximate_id}
\end{align}
where
\begin{gather*}
    Z_\ell = -\left(\sum_{0 \le k < \ell} \lambda^{-1-k} (I - A_\ell)(B_N)^k \right) + \lambda^\ell E_\ell\\ 
    \mathcal R_\ell = \sum_{j = 0}^{\ell - 1} \lambda^{- j - 1} B_N^j (1 - A_{\ell - j} )B_N A_{\ell - j - 1}
\end{gather*}
By assumption, condition (\ref{r_cond'}) is satisfied, so \cref{localization} gives the desired remainder bound 
\begin{equation*}
    \| \mathcal R_\ell\|_{\ell^2_N \to \ell^2_N} \le \sum_{j = 0}^{\ell - 1} |\lambda|^{ - j - 1} \tilde g\left(N \cdot \left( a_{\ell - j} - \frac{a_{\ell - j - 1}}{M} \right) \right).
\end{equation*}

To bound the rank of $A_\ell$, we observe that $\Phi^{-\ell}([0, 1])$ is the union of $M^{\delta \ell}$ copies of intervals of length $M^{-\ell}$. Then from (\ref{eq:fat_Cantor}), the measure of $X_\ell$ can be bounded by 
\[|X_\ell| \le M^{\delta \ell}(M^{-\ell} + 2a_\ell).\]
By \cref{eq:a_j}, we then obtain the desired bound
\[\rank A_\ell \le 2N \cdot |X_\ell| \le 2M^{\ell \delta} \left[ \frac{N}{M^\ell} + 2 \sum_{k = 1}^\ell \frac{d_k}{M^{\ell - k}} \right].\]
Note that the above inequality holds since $M^{-\ell} \le 1/N$ by assumption, and the factor of 2 here is merely to account for the discretization. The proposition then follows by putting 
\begin{gather*}
    A = A_\ell \\
    Z(\lambda) = Z_\ell \\
    \mathcal R(\lambda) = \mathcal R_\ell
\end{gather*}
\end{proof}

\subsection{Propagation with smooth cutoff}\label{sec:smooth_gaps}
Ultimately, we want to find the asymptotics of the eigenvalue counting function as $N \to \infty$ for a fixed $\nu > 0$. Therefore, we need some uniform control over the identity \cref{eq:long-time-prop} for all sufficiently large $N$ at a fixed $\nu$. In particular, we choose $d_j$ so that $\mathcal R_\ell$ will be uniformly small for all large $N$ and the rank of $A$ will be on the order $N^\delta$. 

In the case that $\chi \in C_c^\infty((0, 1)$, recall that $\tilde g = \tilde g_\chi$ is rapidly decaying. Then provided that the gaps $d_{\ell - j}$ are chosen so that (\ref{r_cond'}) holds, the identity (\ref{eq:long-time-prop}) holds with the remainder estimate (\ref{eq:mathcal_R}) given by 
\begin{equation}\label{eq:remainderbound}
\|\mathcal R_{\ell}\|_{\ell^2_N \to \ell^2_N} \le C_n \sum_{j = 0}^{\ell - 1} \lambda^{-j - 1} d_{\ell - j}^n
\end{equation}
for constants $C_n$ depending only on $\chi$. Note that for $|\lambda| < 1$, the factor $\lambda^{-j - 1}$ in \cref{eq:remainderbound} increases exponentially as $j$ increases. In order for $\mathcal R_{\ell}$ to be small in norm, this growth needs to be tempered by $d_{\ell - j}$. The strategy is to choose $d_{\ell -  j}$ in such a way so that the sum in \cref{eq:remainderbound} becomes exponentially decreasing in $j$.

Therefore, we put
\begin{equation}\label{eq:gap_dist_choice}
    d_{\ell - j} = \frac{L \cdot M^j}{1.5^j}.
\end{equation}
We will choose $L > 0$. Meanwhile, let the time of propagation be
\begin{equation}
    \ell = \left \lfloor \frac{\log N}{\log M} \right \rfloor.
\end{equation}
Since $M \ge 2$, there exists a sufficiently large $n$ so that
\begin{equation}\label{eq:n_condition}
    \frac{1.5^n}{M^{n - \nu}} < \frac{1}{2}.
\end{equation}
Then choose $L$ so that 
\begin{equation}\label{eq:L_condition}
    L^n > 4 M^{\nu} C_n
\end{equation}
where the constant $C_n$ is as in \cref{eq:remainderbound}. Note that the choice of $L$ depends only on $\chi$ and $M$. Next,
\[\frac{d_{\ell - j}}{N} \le \frac{L}{1.5^j} \frac{M^j}{N} \le \frac{L}{1.5^\ell}.\]
Therefore for all sufficiently large $\ell$ (and thus for all sufficiently large $N$), condition (\ref{r_cond'}) will be satisfied. Therefore, \cref{prop:long_time} applies and we have the remainder estimate
\begin{align}
    \|\mathcal R_\ell \|_{\ell_N^2 \to \ell_N^2} &\le C_n \sum_{j = 0}^{\ell - 1} \lambda^{-j - 1}d_{\ell - j}^{-n} \nonumber \\
    &\le \frac{C_n M^{\nu}}{L^n} \sum_{j = 0}^{\ell - 1} \left( \frac{2^{n}}{M^{(n - \nu)}} \right)^j \nonumber \\
    &\le \frac{1}{2} \label{eq:small_remainder}
\end{align}
Furthermore, \cref{rank_est} gives the rank bound 
\[\rank A \le 2N^\delta \left(1 + 2 \sum_{k = 1}^\ell \frac{L}{1.5^{\ell - k}} \right) \le C N^\delta\]
where $C$ depends only on $\nu$, $\chi$ and $M$. 
In summary, we have the following corollary of \cref{prop:long_time}:
\begin{corollary}\label{cor:long_time}
Consider the quantum open maps given by the triple $(M, \mathcal A, \chi)$ and fix $\nu > 0$. For all sufficiently large $N = K M$ where $K \in \N$ there exists operators $A$, $Z(\lambda)$, and $\mathcal R(\lambda)$ on $\ell^2_N$ as in \cref{prop:long_time} that satisfies the identity \cref{eq:long-time-prop}. Furthermore, they satisfy the remainder estimate 
\[\|\mathcal R(\lambda)\|_{\ell^2_N \to \ell^2_N} \le 1/2\]
and the rank bound 
\[\rank A \le CN^\delta\]
where $C$ does not depend on $N$. 
\end{corollary}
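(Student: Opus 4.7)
The plan is to apply \cref{prop:long_time} with a specific choice of gap sequence $\{d_j\}$ and propagation time $\ell$ calibrated to the fixed annular depth $\nu$, arranged so that the remainder series in \cref{eq:mathcal_R} becomes geometrically summable with total mass $\le 1/2$ while the rank bound \cref{rank_est} reduces to $O(N^\delta)$.

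First I would set $\ell = \lfloor \log N / \log M \rfloor$, the natural Ehrenfest-type time scale, so that $M^{\ell \delta} \le N^\delta$ and the term $N/M^\ell$ in \cref{rank_est} stays bounded. To counteract the factor $|\lambda|^{-j-1} \le M^{\nu(j+1)}$ that grows geometrically in $j$ inside the remainder sum, I would then take
\[ d_{\ell - j} = \frac{L \cdot M^j}{1.5^j}, \quad 0 \le j \le \ell - 1, \]
for a positive constant $L$ still to be fixed. Since $\chi \in C_c^\infty((0,1))$ gives $\tilde g_\chi(x) \le C_n x^{-n}$ for every $n$, the $j$-th term of the remainder series is bounded by $(C_n M^\nu / L^n)(1.5^n / M^{n - \nu})^j$. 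Because $M \ge 2 > 1.5$, I would first pick $n$ large enough (depending only on $M$ and $\nu$) so that $1.5^n / M^{n - \nu} < 1/2$, and then pick $L$ large enough so that $L^n > 4 M^\nu C_n$. Both choices are independent of $N$, and summing the geometric series gives $\|\mathcal R(\lambda)\| \le 1/2$.

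Next, I would verify the hypothesis \textup{(\ref{r_cond'})} of \cref{prop:long_time}. A direct computation using $1.5 \le M$ yields $d_{\ell - j}/N \le L/1.5^\ell$ uniformly in $j$, which tends to zero as $N \to \infty$; hence the hypothesis holds once $N$ is sufficiently large depending on $\chi$, $M$, and $\nu$. For the rank bound, plugging $d_k / M^{\ell - k} = L/1.5^{\ell - k}$ into \cref{rank_est} produces a uniformly bounded geometric sum, while $M^{\ell \delta} \le N^\delta$ controls the prefactor, so $\rank A \le C N^\delta$ with $C$ independent of $N$.

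The only real subtlety is the order in which the parameters are chosen: $n$ must be fixed first from $M$ and $\nu$ alone, then $L$ from $n$, $C_n$, $M$, and $\nu$, and only afterwards is $N$ taken sufficiently large. Respecting this ordering is what makes both the remainder and rank bounds hold uniformly for all large $N$ with constants depending only on $\nu$, $\chi$, and $M$; beyond that, the corollary is essentially a mechanical substitution into \cref{prop:long_time}.
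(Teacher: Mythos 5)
Your proposal matches the paper's own argument essentially line for line: the same choice of propagation time $\ell = \lfloor \log N / \log M\rfloor$, the same geometric gap sequence $d_{\ell-j} = LM^j/1.5^j$, the same order of fixing $n$ from (\ref{eq:n_condition}) and then $L$ from (\ref{eq:L_condition}), the same verification of (\ref{r_cond'}), and the same telescoping bounds on the remainder and rank. The one cosmetic difference is that you carry $1.5^n$ correctly through the remainder computation where the paper's display (\ref{eq:small_remainder}) writes $2^n$, a harmless slip since the paper's condition (\ref{eq:n_condition}) is stated in terms of $1.5^n$ anyway.
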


\subsection{Propagation with Gevrey cutoff}
In the previous section, we fixed some $\nu$ and could not have extracted dependence of the rank estimates on $\nu$ since we do not know how the constants $C_n$ behave. In particular, the dependence on $\nu$ is buried the choice of $L$ in \cref{eq:L_condition}. However, if we assume that $\chi \in \mathcal{G}^s_{\mathrm{c}}((0, 1))$, we get more explicit control over the decay of $\tilde g$. 

For $N \ge \nu^s \ge 1$, put the time of propagation as
\begin{equation}\label{eq:g_ell_choice}
    \ell = \left \lceil \frac{\log \left( \frac{N}{\nu^s} \right)}{\log M} \right \rceil
\end{equation}
With $\chi \in \mathcal{G}^s_{\mathrm{c}}((0, 1))$ for $s > 1$, the remainder bound (\ref{eq:mathcal_R}) then gives
\begin{equation}\label{eq:g_rem_bound}
    \|\mathcal R_\ell\|_{\ell^2 \to \ell^2} \le C\sum_{j = 0}^{\ell - 1} \lambda^{-j - 1} e^{-c d_{\ell - j}^{1/s}} = \sum_{j = 0}^{\ell - 1} e^{\nu(j + 1)\log M - cd_{\ell - j}^{1/s}}
\end{equation}
where $C$ and $c$ depend only on $\chi$. Again, the remainder bound holds only if $d_j/N$ is sufficiently small for all $j$ according to \cref{r_cond'}. This condition will eventually be fulfilled using the choice of propagation time given by \cref{eq:g_ell_choice} and choosing $N$ to be sufficiently large. First, we need to choose the gap distances. We see from \cref{eq:g_rem_bound} that we should put
\begin{equation}
    d_{\ell - j} ^{1/s} = \Big( \frac{\nu \log M + \mu}{c} \Big) (j + 1),
\end{equation}
where $\mu$ is chosen to be sufficiently large so that 
\[C\sum_{j = 0}^\infty e^{-\mu(j + 1)} \le \frac{1}{2}.\]
Then if $d_{\ell - j}$ satisfies (\ref{r_cond'}), then the estimate \cref{eq:g_rem_bound} gives the desired remainder bound 
\[\|\mathcal R_\ell \|_{\ell^2 \to \ell^2} \le 1/2.\]
Indeed, note that $d_{\ell - j}$ takes its maximum value at $j = \ell - 1$, so (\ref{r_cond'}) is satisfied if
\begin{equation}
    \frac{2d(\supp \chi, 0)}{M} \ge N^{-1} d_1 = N^{-1} \left( \frac{\nu \log M + \mu}{c} \left \lfloor \frac{\log \left( \frac{N}{\nu^s} \right)}{\log M} \right \rfloor \right)^s
\end{equation}
For $1 \le \nu^s < N$, the above is indeed satisfied for all sufficiently large $N$, and the threshold depends only on $\chi$, $M$, and $\epsilon$.
Finally, to estimate the rank of $A$ in \cref{eq:long-time-prop}, we see from \cref{rank_est} that
\begin{align}
    \rank A &\le 2 \frac{N^\delta}{\nu^{s \delta}} \left[\nu^s + 2 \left(\frac{\nu \log M + \mu}{c} \right)^s \sum_{j = 0}^{\ell - 1} \frac{(j + 1)^s}{M^j} \right] \le CN^\delta \nu^{s(1 - \delta)}
\end{align}
Note analysis above yields the following corollary of \cref{prop:long_time}. 
\begin{corollary}\label{cor:long_time_g}
Consider the quantum open maps given by the triple $(M, \mathcal A, \chi)$ where $\chi \in \mathcal{G}^s_{\mathrm{c}}((0, 1))$ for an $s > 1$. Then for all $1 \le \nu^s < N$, there exists a constant $C_{\chi, M, \epsilon}$ such that for all $N = K M > C_{\chi, M, \epsilon}$ where $K \in \N$, there exists operators $A$, $Z(\lambda)$, and $\mathcal R(\lambda)$ on $\ell^2_N$ as in \cref{prop:long_time} that satisfies the identity \cref{eq:long-time-prop}. Furthermore, they satisfy the remainder estimate 
\[\|\mathcal R(\lambda)\|_{\ell^2_N \to \ell^2_N} \le 1/2\]
and the rank bound 
\[\rank A \le CN^\delta \nu^{s(1 - \delta)}\]
where $C$ does not depend on $N$ or $\nu$. 
\end{corollary}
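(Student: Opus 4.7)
The plan is to apply \cref{prop:long_time} with carefully chosen propagation time $\ell$ and gap distances $d_j$ that are tailored to exploit the Gevrey decay $\tilde g_\chi(x) \le Ce^{-cx^{1/s}}$, and then verify that the resulting operators satisfy both the norm estimate and the rank estimate with the claimed $\nu$-dependence. All of the calculations needed are sketched in the paragraphs preceding the corollary; the job of the proof is to collect them and check the hypotheses of \cref{prop:long_time}.

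First I would fix the propagation time $\ell = \lceil \log(N/\nu^s)/\log M \rceil$, which is the largest scale at which the target rank $N^\delta \nu^{s(1-\delta)}$ is consistent with the geometric factor $M^{\ell \delta}$ from \cref{rank_est}. Next, looking at the Gevrey remainder bound
\[\|\mathcal R_\ell\|_{\ell^2_N\to\ell^2_N} \le C \sum_{j=0}^{\ell - 1} \exp\bigl(\nu(j+1)\log M - c\,d_{\ell - j}^{1/s}\bigr),\]
I would choose $d_{\ell - j}^{1/s} = (\nu \log M + \mu)(j+1)/c$ so that the exponent becomes $-\mu(j+1)$ and the whole sum is dominated by a geometric series; then pick $\mu$ (depending only on $\chi$) large enough that $C \sum_{j\ge 0} e^{-\mu(j+1)} \le 1/2$.

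Before invoking \cref{prop:long_time}, I would verify the admissibility condition \textup{(\ref{r_cond'})}, namely $d_j/N \le (2/M)\,d(\supp\chi, 0)$ for every $j = 1,\dots,\ell$. Since $d_j$ is maximized at $j = 1$ (i.e.\ at $j = \ell - 1$ in the shifted index), this reduces to a single inequality, and substituting the chosen $\ell$ shows that the left side is of order $(\nu \log N)^s / N$, which is $o(1)$ as $N \to \infty$ in the regime $1 \le \nu^s < N$, so the condition holds for all $N$ above a threshold depending only on $\chi$, $M$, and $\nu$; in fact, because the bound only degrades logarithmically in $N$, this threshold can be taken uniform in $\nu$ on the range $\nu^s < N$. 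With the hypotheses of \cref{prop:long_time} verified, the identity \cref{eq:long-time-prop} and the remainder bound $\|\mathcal R(\lambda)\| \le 1/2$ follow directly.

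Finally, for the rank estimate, I would plug the chosen $d_k$ into \cref{rank_est}:
\[\rank A \le 2 M^{\ell \delta}\Bigl[\tfrac{N}{M^\ell} + 2\sum_{k=1}^\ell \tfrac{d_k}{M^{\ell - k}}\Bigr].\]
The choice of $\ell$ gives $M^{\ell \delta} \le C N^\delta / \nu^{s\delta}$ and $N/M^\ell \le \nu^s$, while the sum $\sum_{k=1}^\ell d_k / M^{\ell - k} = \sum_{j=0}^{\ell-1} ((\nu \log M + \mu)(j+1)/c)^s M^{-j}$ converges to a constant times $\nu^s$ because the polynomial factor $(j+1)^s$ is absorbed by the geometric decay $M^{-j}$. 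Combining these yields $\rank A \le C N^\delta \nu^{s(1-\delta)}$ with $C$ independent of $N$ and $\nu$, completing the proof. The only mildly delicate point is tracking the $\nu$-dependence carefully through each term so that no spurious factor larger than $\nu^{s(1-\delta)}$ appears; everything else is bookkeeping.
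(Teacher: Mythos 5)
Your proposal follows the paper's proof essentially step for step: the same choice of propagation time $\ell = \lceil \log(N/\nu^s)/\log M \rceil$, the same gap distances $d_{\ell-j}^{1/s} = (\nu\log M + \mu)(j+1)/c$ with $\mu$ chosen so that the geometric series is at most $1/2$, the same observation that $d_k$ is maximal at $k=1$ so the admissibility condition (\ref{r_cond'}) reduces to one inequality, and the same substitution into (\ref{rank_est}) to get the rank bound $CN^\delta\nu^{s(1-\delta)}$. This is the paper's argument.

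One soft spot, which the paper shares: your claim that $d_1/N$ is of order $(\nu\log N)^s/N$ and hence ``$o(1)$ uniformly in $\nu$ on $\nu^s < N$'' is not quite right. With the actual $\ell$, $d_1/N \approx (\nu^s/N)\,\ell^s(\log M/c)^s$, and when $\nu^s$ is within a bounded factor of $N$ (so $\ell$ is $O(1)$) this is bounded below by a positive constant depending on $\chi$ and $M$ rather than tending to $0$. So the $N$-threshold cannot literally be taken uniform over all $\nu$ with $\nu^s < N$; one either needs $\nu^s$ bounded away from $N$ by a constant factor, or needs $d(\supp\chi,0)$ large enough relative to $(\log M/c)^s$. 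The paper's presentation elides the same issue, and it is harmless for the way the corollary is actually used in the proof of Theorem~2, but as stated the uniformity assertion in your third paragraph does not follow from your own order estimate.
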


\section{Weyl bounds}
Now we proceed to bounding the number of eigenvalues in $\Omega_\nu$ as defined in (\ref{eq:Omega}). To do so, we will eventually pass to Jensen's formula from complex analysis:

\begin{lemma}\label{lem:jensen}
    Let $f(z)$ be a holomorphic function on a connected open set $\Omega \subset \C$. Let $K \subset \Omega$ be a compact subset. Suppose there exists a constant $L > 0$ and a point $z_0 \in K$ such that 
    \begin{equation}
        \sup_{z \in \Omega} |f(z)| \le e^L, \quad |f(z_0)| \ge e^{-L}.
    \end{equation}
    Then the number of zeros of $f(z)$ in $K$ counted with multiplicity is bounded by 
    \begin{equation}
        |\{z \in K: f(z) = 0\}| \le CL
    \end{equation}
    where the constant $C$ depends only on the geometry, i.e. $z_0$, $\Omega$, and $K$. 
\end{lemma}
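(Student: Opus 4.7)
The plan is to reduce the claim to Jensen's formula applied on a suitable disk centered at $z_0$.

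I start by setting up the geometry. Since $K$ is compact, $\Omega$ is open, and $z_0 \in K \subset \Omega$, I can fix radii $0 < r < R$ depending only on $z_0$, $K$, and $\Omega$ such that $K \subset \overline{D(z_0, r)}$ and $\overline{D(z_0, R)} \subset \Omega$; implicit in the statement is the geometric hypothesis that such radii exist.

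Since $|f(z_0)| \ge e^{-L} > 0$, $f$ is nonvanishing at the center of the disk, so Jensen's formula applies on $D(z_0, R)$. Listing the zeros $a_1, \ldots, a_n$ of $f$ in $D(z_0, R)$ with multiplicity, I get
\begin{equation*}
\sum_{k=1}^n \log\frac{R}{|a_k - z_0|} = \frac{1}{2\pi}\int_0^{2\pi}\log|f(z_0 + Re^{i\theta})|\, d\theta - \log|f(z_0)|.
\end{equation*}
The boundary integrand is at most $L$ by the supremum hypothesis, so the integral is at most $L$; and $-\log|f(z_0)| \le L$ by the lower bound on $|f(z_0)|$. Hence the right-hand side is at most $2L$. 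Every zero lying in $K$ is within distance $r$ of $z_0$, contributing at least $\log(R/r) > 0$ to the sum on the left. If $m$ denotes the number of zeros of $f$ inside $K$ counted with multiplicity, then
\begin{equation*}
m \log(R/r) \le \sum_{a_k \in K} \log\frac{R}{|a_k - z_0|} \le \sum_{k=1}^n \log\frac{R}{|a_k - z_0|} \le 2L,
\end{equation*}
where the middle inequality uses that the remaining terms are positive since $|a_k - z_0| < R$. This gives the desired bound with $C = 2/\log(R/r)$, which depends only on $z_0$, $\Omega$, and $K$.

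No step is a serious obstacle: Jensen's formula does all the work, and the only subtlety is the geometric setup, which is absorbed into the hypothesis that $C$ may depend on the geometry. One could alternatively derive the same bound by writing $f$ as a Blaschke-type product on $D(z_0, R)$ times a nonvanishing factor and applying the maximum principle, but the direct Jensen approach is shorter.
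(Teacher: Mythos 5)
Your disk-based argument has a real gap: you assume there exist radii $0 < r < R$ with $K \subset \overline{D(z_0,r)}$ and $\overline{D(z_0,R)} \subset \Omega$, and you call this ``implicit in the statement.'' It isn't. The lemma is stated for an arbitrary connected open $\Omega$ and compact $K \subset \Omega$, and precisely the application the paper has in mind violates your assumption. In the proof of Theorem~1 the lemma is applied with $\Omega = \Omega_\nu = \{M^{-\nu} < |\lambda| < 5\}$ and $z_0 = 4$, with $K$ a compact annulus containing the circle $|\lambda| = M^{-\nu}+\epsilon$. Any disk centered at $z_0 = 4$ that contains, say, $\lambda = -M^{-\nu}$ must have radius $> 4$, and its closure then leaves $\Omega_\nu$ (it would contain points with $|\lambda| > 8$). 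So there is no pair $(r,R)$ of the kind your argument requires, and your proof does not apply to the very setting it is invoked in.

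The standard proof, and the one in the cited reference \cite[Lemma~4.4]{DZ}, handles the general case by a covering-and-chaining argument rather than a single disk. Since $K$ is compact and $\Omega$ is open, cover $K$ by finitely many disks $D(z_1,r_1),\dots,D(z_m,r_m)$ with $\overline{D(z_j,2r_j)} \subset \Omega$, and, using connectedness of $\Omega$, connect $z_0$ to each $z_j$ by a chain of such disks in which consecutive disks overlap. On each disk one has the upper bound $\sup|f| \le e^L$. Jensen's formula (or a Harnack-type inequality for $\log|f|$, or the Borel--Carath\'eodory inequality) on a disk where $|f(\zeta)| \ge e^{-cL}$ at one interior point gives a lower bound $|f| \ge e^{-c'L}$ at every point of a slightly smaller concentric disk away from the zeros, and in particular propagates the lower bound at $z_0$ to a lower bound $|f(z_j)| \ge e^{-C_jL}$ at each center $z_j$, with $C_j$ depending only on the geometry of the chain. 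One then applies Jensen on each $D(z_j,2r_j)$ exactly as you did to bound the number of zeros in $D(z_j,r_j)$ by $C_j'L$, and sums over $j$. Your single-disk argument is exactly the base step of this scheme; what is missing is the chaining that makes the constant depend only on $(z_0, K, \Omega)$ without any extra convexity-type hypothesis.
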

See \cite[Lemma 4.4]{DZ} for a proof of the lemma. 

We want to apply Lemma \ref{lem:jensen} to some expression involving a factor of $\det(B_N - \lambda)$ in the region $\Omega_\nu$ (defined in (\ref{eq:Omega})) in order to count the number of eigenvalues in $\Omega_\nu$.  To get the lower bound at a point in $\Omega_\nu$ required in Lemma \ref{lem:jensen}, we first modify the approximate inverse identity (\ref{eq:long-time-prop}) as follows:
\begin{align}
    I &= \lambda^{-1} B_N - \lambda^{-1} (B_N - \lambda) \nonumber \\
    &= Z(B_N - \lambda) + \mathcal R + \lambda^{-1} A B_N - \lambda^{-1} A(B_N - \lambda) \nonumber \\
    &= (Z - \lambda^{-1} A)(B_N - \lambda) + \mathcal R + \lambda^{-1} AB_N \label{eq:forward_back}
\end{align}
where $Z$ and $\mathcal R$ depend holomorphically on $\lambda$. In either the general cutoff setting and the Gevrey cutoff setting, Corollary \ref{cor:long_time} and Corollary \ref{cor:long_time_g} both give the bound $\|\mathcal R(\lambda)\|_{\ell^2_N \to \ell^2_N} \le 1/2$. Therefore, we can define 
\begin{equation}\label{eq:det_express}
\begin{gathered}
    \mathcal B_N(\lambda) := \lambda^{-1} A B_N (I - \mathcal R)^{-1} \\
    F(\lambda) := \det(I - \mathcal B_N)
\end{gathered}
\end{equation}
Note that $F(\lambda)$ is holomorphic in the annulus $\Omega_\nu$. From \cref{eq:forward_back}, we have
\begin{align*}
    F(\lambda) &= \det(I - \mathcal R) \det(I - \mathcal R -  \lambda^{-1} A B_N) \\
    &= \det(I - \mathcal R) \det(Z - \lambda^{-1} Z) \det(B_N - \lambda).
\end{align*}
Therefore if $\lambda$ is an eigenvalue, it must also be a zero of $F(\lambda)$ considered with multiplicity. Thus it suffices to bound the number of zeros of $F$.

\subsection{Proof of Theorem 1}\label{sec:weyl_law_pf}
By Corollary \ref{cor:long_time}, we see that for all $\lambda \in \Omega_\nu$, 
\begin{equation}\label{eq:jen_up}
    |F(\lambda)| \le (\|\mathcal B_N\|_{\ell^2_N \to \ell^2_N} + 1)^{\rank \mathcal B_N} \le (2M^\nu + 1)^{\rank A} \le e^{C N^\delta}
\end{equation}
where the constant $C$ does not depend on $N$. Now we want to find a lower bound on $F(\lambda)$ at a single point. Observe that at $\lambda = 4$,
\[\|\mathcal B_N(4)\|_{\ell^2_N \to \ell^2_N} \le \frac{1}{2},\]
and so
\begin{align}
    |F(4)|^{-1} &= |\det((I - \mathcal B_N(4))^{-1})| \nonumber \\
    &= |\det(I + \mathcal B_N(4)(I - \mathcal B_N(4))^{-1})| \nonumber \\
    &\le \|I + \mathcal B_N(4)(I - \mathcal B_N(4))^{-1}\|_{\ell^2_N \to \ell^2_N}^{\rank \mathcal B_N} \nonumber \\
    &\le e^{CN^\delta} \label{eq:jen_low}
\end{align}
where again the constant $C$ does not depend on $N$. Therefore, Theorem 1 follows from applying Lemma \ref{lem:jensen} to \cref{eq:jen_low} and \cref{eq:jen_up}.

\subsection{Proof of Theorem 2}
We modify the definition of the domain $\Omega_\nu$ slightly to ensure the geometry scales correctly later. Take
\[\Omega_\nu = \{z : M^{-\nu} \le |z| \le e^{5 \nu}\}.\]
Clearly, counting zeros of $F(\lambda)$ for $\lambda \in \Omega_\nu$ suffices, and as long as $e^{5 \nu} \ge 4$, we will be able to find a lower bound at a single point of $F(\lambda)$. 

By Corollary \ref{cor:long_time_g}, for all $|\lambda| \ge M^{-\nu}$,
\begin{equation}\label{eq:G_upper}
    |F(\lambda)| \le (2M^\nu + 1)^{\rank A} \le M^{C \nu \cdot N^{\delta} \nu^{s(1 - \delta)}}.
\end{equation}
where the constant $C$ is independent of $N$ and $\nu$. For a lower bound on large $\lambda$, note that for all $\lambda \ge 4$,
\begin{equation*}
    \|\mathcal B(\lambda) \|_{\ell^2 \to \ell^2} \le \frac{1}{2},
\end{equation*}
and thus for such $\lambda$,
\begin{align}
    |F(\lambda)|^{-1} &= |\det((I - \mathcal B(\lambda))^{-1})| \nonumber \\
    &= |\det(I + \mathcal B(\lambda)(I - \mathcal B(\lambda))^{-1})| \nonumber \\
    &\le M^{C \nu \cdot N^{\delta} \nu^{s(1 - \delta)}}, \label{eq:G_lower}
\end{align}
where again the constant does not depend on $N$ or $\nu$. The domain $\Omega_\nu$ in which we wish to upper bound the number of zeros varies with $\nu$, and the constant in Lemma \ref{lem:jensen} depends on the geometry of the domain. Therefore, in order to capture the dependence on $\nu$, consider the function 
\begin{equation}
    \tilde F(\omega) := F(e^\omega).
\end{equation}
In particular, the number of zeros of $F(\lambda)$ for $\lambda \in \Omega_\nu$ is the same as the number of zeros of $\tilde G(\omega)$ for 
\[\omega \in \{a + bi: a \in [-\nu \log M, 5 \nu], \, b \in [2 \pi k, 2 \pi(k + 1))\}\]
for any $k \in \Z$. Let 
\[\tilde \Omega_\nu = \{a + bi: a \in [\nu \log M, 5\nu], b \in [0, 2 \pi \nu)\}\]
Let $\mathcal N(\nu)$ denote the number of zeros of $F(\lambda)$ for $\lambda \in \Omega_\nu$ and let $\tilde {\mathcal N}(\nu)$ denote the number of zeros of $\tilde F(\omega)$ for $\omega \in \tilde \Omega_\nu$. Note that $\frac{1}{\nu}\tilde \Omega_\nu$ is the same domain independent of $\nu$. Then applying Lemma \ref{lem:jensen} with \cref{eq:G_upper} and \cref{eq:G_lower}, where the latter is taken at the point $e^{4 \nu}$, then for all sufficiently large $\nu$, we have the upper bound
\begin{align*}
    \mathcal N(\nu) &\le \frac{C}{\nu} \tilde{\mathcal{N}}(\nu) \\
    &= \frac{C}{\nu}|\{\text{zeros of $\tilde F(\nu\omega)$ for $\omega \in \frac{1}{\nu}\Omega_\nu$}\}| \\
    &\le CN^\delta \nu^{s(1 - \delta)},
\end{align*}
where the constant $C$ does not depend on $N$ or $\nu$. This concludes the proof.

\section{Numerical discussion}
In this section, we look at how the Weyl upper bounds derived in this paper perform against numerical data. All plots were made using MATLAB, version R2021b. 

We use the same smooth cutoff function as in \cite{DZ} and observe that it is $2$-Gevrey. The cutoff is constructed as follows. Let 
\[f(x) = \int_{-\infty}^{1.02 \cdot x - 0.01} \exp\left(-\frac{1}{t(1 - t)} \right)\, dt.\]
Note that $f(x) = 0$ for $x \le \frac{0.01}{1.02}$ and $f(x) = 1$ for $x \ge \frac{1.01}{1.02}$. Given a tightness parameter $\tau \in (0, 1/2]$, we then define the cutoff
\[\chi = f\left( \frac{x}{\tau} \right) f \left( \frac{1 - x}{\tau} \right).\]
$\chi$ is $2$-Gevrey and is identically $1$ near the interval $[\tau, 1 - \tau]$. 

The MATLAB function \texttt{eig()} was used to compute eigenvalues. We note that column $j$ of $B_{N, \chi}$ is identically zero if $\lfloor j \cdot M/N \rfloor \in \mathcal A$. We cut these columns as well as the corresponding rows from the matrix $B_{N, \chi}$ to form an $K|\mathcal A| \times K |\mathcal A|$ matrix $\tilde B_{N, \chi}$ and compute the eigenvalues of the trimmed matrix using MATLAB. The nonzero eigenvalues of $B_{N, \chi}$ are identical to those of $\tilde B_{N, \chi}$, so for the sake of counting the number of eigenvalues greater than $M^{-\nu}$, using the trimmed matrix only speeds up the computation. 

\begin{figure}
\begin{subfigure}{0.49\textwidth}
  \centering
  \includegraphics[width=\linewidth]{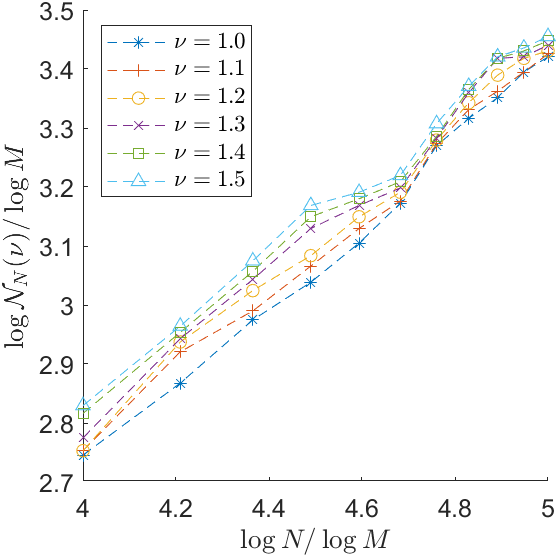}
\end{subfigure}
\hfill
\begin{subfigure}{0.49\textwidth}
  \centering
  \includegraphics[width=\linewidth]{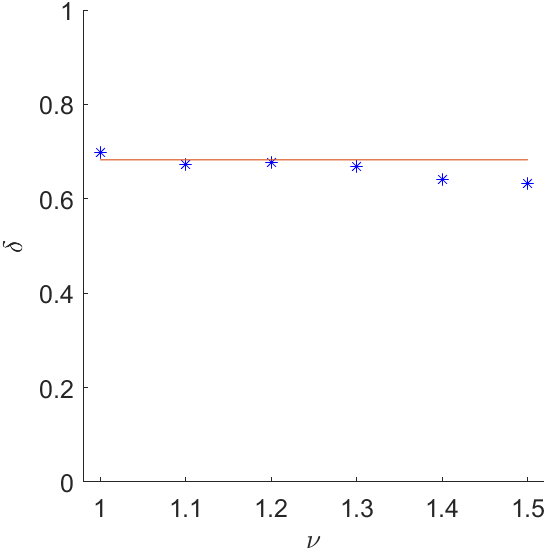}
\end{subfigure}
\caption{$M = 5$, $\mathcal A = \{1, 2, 3\}$, $\tau = 0.05$. Left: $\log \mathcal N_N(\nu)/\log M$ is plotted against $\log N/\log M$ for $K = N/M = \{125, 175, 225, \dots, 625\}$ given various fixed values of $\nu$. From top to bottom, the corresponding values of $\nu$ are $1.5, 1.4, \dots, 1.0$. Right: the corresponding slope of the linear regression of each curve is plotted against $\nu$, and the red line is at $\delta = \log{|\mathcal A|}/\log M$.}
\label{fig:weyl_law}
\end{figure}

\subsection{Dependence on \texorpdfstring{$N$}{}}
For a fixed $\nu$, the counting function $\mathcal N_N(\nu)$ is asymptotically upper bounded by $N^\delta$ as $N = K M \to \infty$. For the numerical experiment in \cref{fig:weyl_law} we plot $\log \mathcal N_N(\nu)/\log M$ against $\log N/\log M$ for several different values of $\nu$, and for each $\nu$, we compute the slope of the linear regression. The numerically computed slopes are all fairly close to $\delta = \frac{\log |\mathcal A|}{\log M}$. Similar numerical results can be obtained for other quantum open baker's maps. This is in numerical agreement with the upper bound derived in this paper, and suggests that there could be matching lower bound, although no such bounds are known. 

The numerics depicted in \cref{fig:weyl_law} is fairly stable under perturbations on the order $10^{-5}$ in the given range of $K$ and $\nu$. In particular, for each $N = K \cdot M$, we also computed the spectrum of $\tilde B_{N, \chi} + P$ where $P$ is a random matrix whose entries are i.i.d. random Gaussians, and the whole matrix is normalized so that $\|P\|_{\ell^2 \to \ell^2} = 10^{-5}$. Running the same experiment as in \cref{fig:weyl_law} with each of the matrices perturbed by a random matrix of norm $10^{-5}$, the differences in the resulting slopes are on the order $10^{-3}$, which suggests a lack of strong pseudo spectral effects in the range of $N$ and $\nu$ of concern.

\begin{figure}
    \centering
    \includegraphics[width = 0.8 \textwidth]{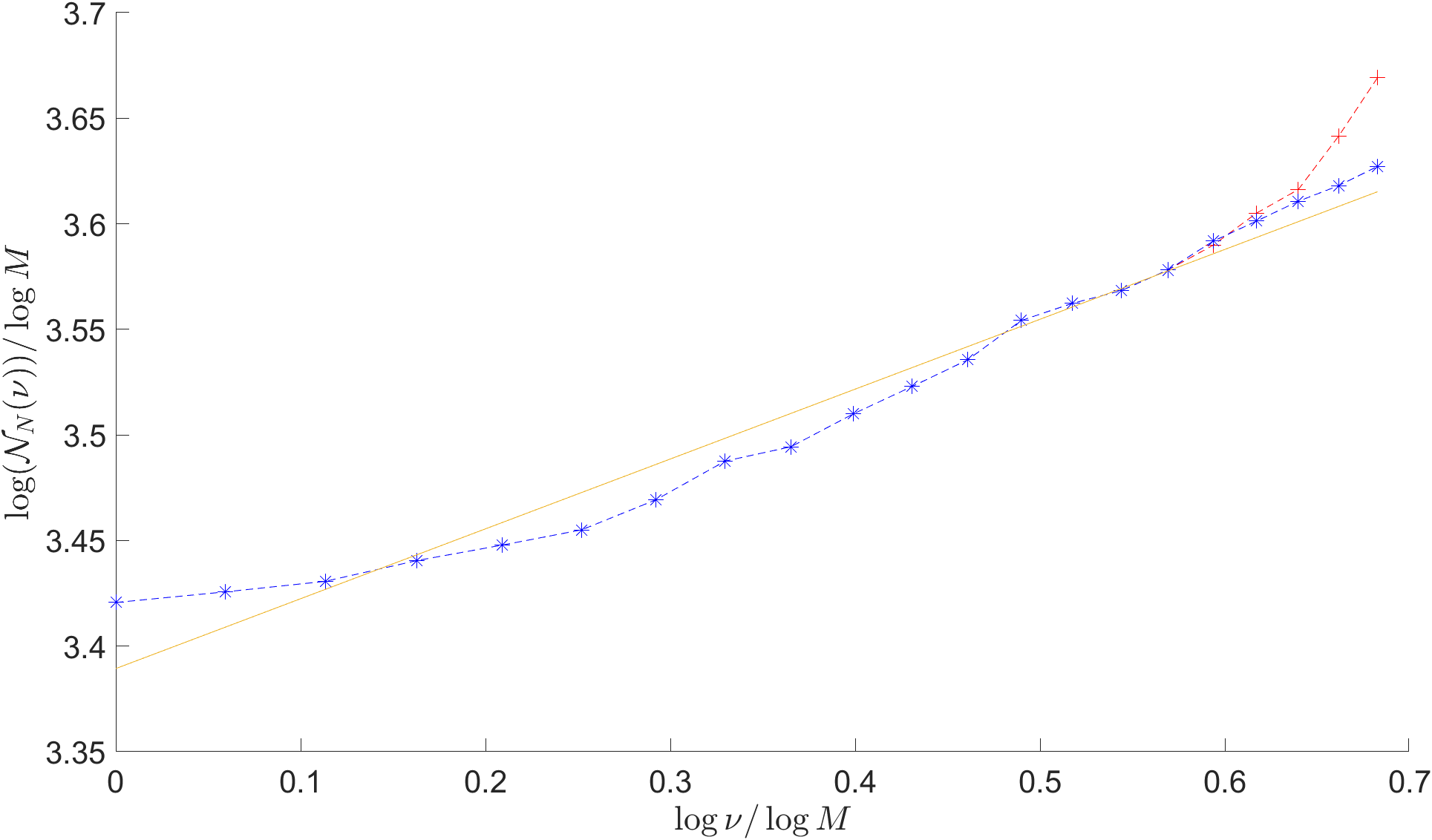}
    \caption{$M = 5$, $\mathcal A = \{1, 2, 3\}$, $\tau = 0.05$, $N = 5^5$. The blue data set is $\log \mathcal N_N(\nu)$ plotted against $\log \nu$ for $\nu = 1.0, 1.1, 1.2, \dots, 3.0$. The red line is the same experiment for the perturbed matrix $\tilde B_{N, \chi} + P$ where $P$ is a randomly chosen matrix normalized so that $\|P\|_{\ell^2 \to \ell^2} \le 10^{-10}$. The yellow line is the linear regression for the experiment in blue, and has a slope of $0.3308$, which is fairly close to $1 - \delta \approx 0.3174$.}
    \label{fig:g_weyl_law}
\end{figure}

\subsection{Dependence on \texorpdfstring{$\nu$}{}} \label{numerics}
Now we fix a large $N = K M$ and see how the counting function $\mathcal N_N(\nu)$ varies with $\nu$. Since we have the asymptotic upper bound $\sim \nu^{s(1 - \delta)}$, we plot $\log \nu/ \log M$ against $\log \mathcal N_N(\nu) / \log M$. The numerical data for $\mathcal N_N(\nu)$ becomes more unstable as $\nu$ becomes large. In \cref{fig:g_weyl_law}, we fix $N = 5^5$ and go to largest $\nu$ for which a perturbation on the order $10^{-10}$ yields no discernible difference. The line of best fit as depicted in \cref{fig:g_weyl_law} has a slope of $0.3308$, which is much closer to $1 - \delta \approx 0.3174$ than to $s(1 - \delta)$. We note that the alphabet and $\chi$ we chose is such that $\chi$ is identically 1 on the Cantor set associated with the alphabet. In fact, similar experiments with the cutoff identically one near the Cantor set has similar behavior in that $\mathcal N_N(\nu)$ behaves like $\nu^{1 - \delta}$. However, the Weyl bound $\sim N^\delta \delta^{s(1 - \delta)}$ holds for all choices of alphabets and $s$-Gevrey cutoffs.

In the edge case where we take the alphabet to be a single point and $\delta = 0$, we should see that the magnitude of the first few eigenvalues to decrease exponentially, which would exhibit the $\mathcal N_N(\nu) \lesssim \nu^2$ behavior. Indeed, this is what we see in \cref{fig:delta_0}. We remark that essentially the same values for the top eigenvalues is obtained if we take other values of $N$, which makes sense in light of the fact that the upper bound we derived is independent of $N$ for the degenerate case $\delta = 0$. 

\begin{figure}
\begin{subfigure}{0.49\textwidth}
  \centering
  \includegraphics[width=\linewidth]{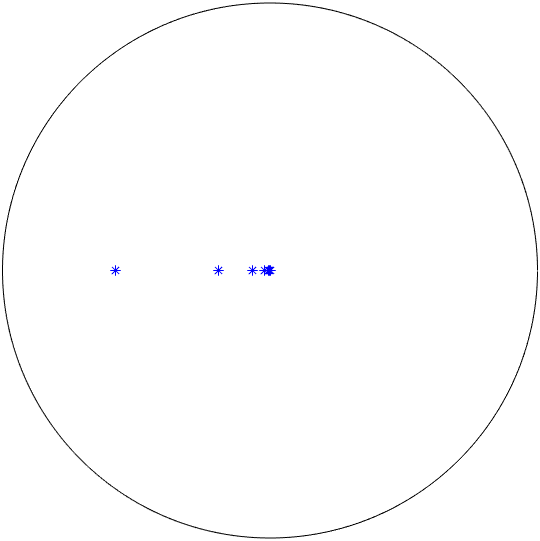}
\end{subfigure}
\hfill
\begin{subfigure}{0.49\textwidth}
  \centering
  \includegraphics[width=\linewidth]{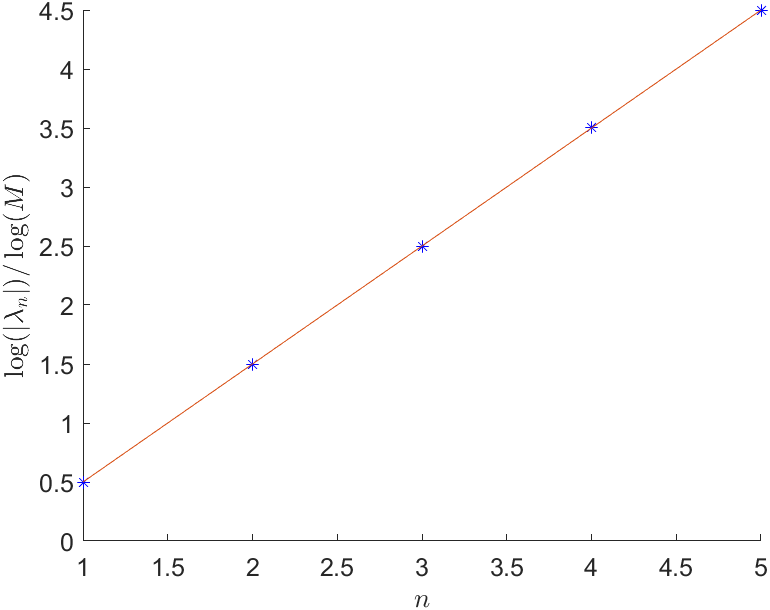}
\end{subfigure}
\caption{$M = 3$, $\mathcal A = \{1\}$, $\tau = 0.1$, $N = 3^6$. In particular, this is the degenerate case where the dimension of the trapped set is $\delta = 0$. Left: A plot of the eigenvalues in the complex unit circle. Right: Log of the magnitude of the five largest eigenvalues}
\label{fig:delta_0}
\end{figure}

\medskip\noindent\textbf{Acknowledgements.}
The author would like to thank Semyon Dyatlov for suggesting this project, for all the the insightful discussions about the methods in his paper with Long Jin \cite{DZ} on which the current paper is based, and for the helpful comments and suggestions on early drafts of this paper.

\bibliographystyle{alpha}
\bibliography{oqm.bib}

\newcommand{\etalchar}[1]{$^{#1}$}
\begin{thebibliography}{PWB{\etalchar{+}}12}

\bibitem[Bor14]{Bor}
David Borthwick.
\newblock Distribution of resonances for hyperbolic surfaces.
\newblock {\em Experimental Mathematics}, 23(1):25--45, 2014.

\bibitem[BV89]{BaVo}
N\'andor~L. Bal\'azs and Andr\'e Voros.
\newblock The quantized baker's transformation.
\newblock {\em Annals of Physics}, 190(1):1--31, 1989.

\bibitem[BW16]{BoWe}
David Borthwick and Tobias Weich.
\newblock Symmetry reduction of holomorphic iterated function schemes and
  factorization of selberg zeta functions.
\newblock {\em J. Spectr. Th.}, 6:267--329, 2016.

\bibitem[DBW19]{Dy15b}
Semyon Dyatlov, David Borthwick, and Tobias Weich.
\newblock Improved fractal weyl bounds for hyperbolic manifolds. with an
  appendix by david borthwick, semyon dyatlov and tobias weich.
\newblock {\em Journal of the European Mathematical Society},
  21(6):1595–1639, Feb 2019.

\bibitem[DD13]{DaDy}
Kiril Datchev and Semyon Dyatlov.
\newblock Fractal weyl laws for asymptotically hyperbolic manifolds.
\newblock {\em Geom. Funct. Anal.}, 23:1145--1206, 2013.

\bibitem[DJ17]{DZ}
Semyon Dyatlov and Long Jin.
\newblock Resonances for open quantum maps and a fractal uncertainty principle.
\newblock {\em Communications in Mathematical Physics}, 354(1):269–316, May
  2017.

\bibitem[ENW06]{DNW}
Mirko~Degli Esposti, St\'ephane Nonnenmacher, and Brian Winn.
\newblock Quantum variance and ergodicity for the baker’s map.
\newblock {\em Communications in Mathematical Physics}, 263(2):325–352, Feb
  2006.

\bibitem[Gev18]{Gev}
Maurice Gevrey.
\newblock Sur la nature analytique des solutions des \'equations aux
  d\'eriv\'ees partielles. {Premier} m\'emoire.
\newblock {\em Annales scientifiques de l'\'Ecole Normale Sup\'erieure}, 3e
  s{\'e}rie, 35:129--190, 1918.

\bibitem[GLZ04]{GLZ}
Laurent Guillopé, Kevin~K. Lin, and Maciej Zworski.
\newblock The selberg zeta function for convex co-compact schottky groups.
\newblock {\em Communications in Mathematical Physics}, 245(1):149–176, Jan
  2004.

\bibitem[KNNS08]{KNNS}
Jonathan~P. Keating, St{\'e}phane Nonnenmacher, Marcel Novaes, and Martin
  Sieber.
\newblock On the resonance eigenstates of an open quantum baker map.
\newblock {\em Nonlinearity}, 21(11):2591--2624, sep 2008.

\bibitem[KNPS06]{KNPS}
Jonathan~P. Keating, Marcel Novaes, Sandra~D. Prado, and Martin Sieber.
\newblock Semiclassical structure of chaotic resonance eigenfunctions.
\newblock {\em Phys. Rev. Lett.}, 97:150406, Oct 2006.

\bibitem[LSZ03]{LSZ}
Wentao. Lu, Srinivas Sridhar, and Maciej Zworski.
\newblock Fractal weyl laws for chaotic open systems.
\newblock {\em Phys. Rev. Lett.}, 91:154101, Oct 2003.

\bibitem[NSZ11]{NSZ11}
Stéphane Nonnenmacher, Johannes Sjöstrand, and Maciej Zworski.
\newblock From open quantum systems to open quantum maps.
\newblock {\em Communications in Mathematical Physics}, 304(1):1–48, Mar
  2011.

\bibitem[NSZ14]{NSZ}
Stéphane Nonnenmacher, Johannes Sjöstrand, and Maciej Zworski.
\newblock Fractal weyl law for open quantum chaotic maps.
\newblock {\em Annals of Mathematics}, 179(1):179--251, 2014.

\bibitem[NZ06]{NoZw07}
Stéphane Nonnenmacher and Maciej Zworski.
\newblock Distribution of resonances for open quantum maps.
\newblock {\em Communications in Mathematical Physics}, 269(2):311–365, Nov
  2006.

\bibitem[PWB{\etalchar{+}}12]{PWBKSZ}
Alexander Potzuweit, Tobias Weich, Sonja Barkhofen, Ulrich Kuhl, Hans-J\"urgen
  St\"ockmann, and Maciej Zworski.
\newblock Weyl asymptotics: From closed to open systems.
\newblock {\em Physical review. E, Statistical, nonlinear, and soft matter
  physics}, 86:066205, 12 2012.

\bibitem[Rod93]{Ro}
Luigi Rodino.
\newblock {\em Linear Partial Differential Operators in Gevrey Spaces}.
\newblock World Scientific, 1993.

\bibitem[Sj{\"o}90]{S}
Johannes Sj{\"o}strand.
\newblock Geometric bounds on the density of resonances for semiclassical
  problems.
\newblock {\em Duke Mathematical Journal}, 60(1):1--57, 1990.

\bibitem[SV94]{SaVo}
Marcos Saraceno and Andr\'e Voros.
\newblock Towards a semiclassical theory of the quantum baker's map.
\newblock {\em Physica D: Nonlinear Phenomena}, 79(2):206--268, 1994.

\bibitem[SZ07]{SjZw}
Johannes Sjöstrand and Maciej Zworski.
\newblock {Fractal upper bounds on the density of semiclassical resonances}.
\newblock {\em Duke Mathematical Journal}, 137(3):381 -- 459, 2007.

\bibitem[Zwo99]{Zw99}
Maciej Zworski.
\newblock Dimension of the limit set and the density of resonances for convex
  co-compact hyperbolic surfaces.
\newblock {\em Invent. Math}, 136:353--409, 1999.

\bibitem[Zwo12]{Zw}
Maciaj Zworski.
\newblock {\em Semiclassical analysis}, volume 138 of {\em Graduate Studies in
  Mathematics}.
\newblock AMS, 2012.

\end{thebibliography}

\end{document}